\documentclass[11pt,a4paper]{amsart}

\usepackage[T1]{fontenc}
\usepackage[utf8]{inputenc}
\usepackage{lmodern}
\usepackage[margin=1.08in]{geometry}
\usepackage{amsmath,amssymb,amsthm,mathtools}
\usepackage{booktabs}
\usepackage{enumitem}
\usepackage{microtype}
\usepackage{xcolor}
\usepackage{listings}
\usepackage{hyperref}
\usepackage{mathrsfs}
\newtheorem{theorem}{Theorem}[section]
\newtheorem{proposition}[theorem]{Proposition}
\newtheorem{lemma}[theorem]{Lemma}

\theoremstyle{definition}

\usepackage{tikz-cd}
\newcommand{\Q}{\mathbf Q}

\newcommand{\Gal}{\operatorname{Gal}}
\newcommand{\Br}{\operatorname{Br}}
\newcommand{\Pic}{\operatorname{Pic}}

\newcommand{\inv}{\operatorname{inv}}

\numberwithin{equation}{section}

\lstdefinestyle{magma}{
  basicstyle=\ttfamily\footnotesize,
  keywordstyle=\color{blue!55!black},
  commentstyle=\color{green!35!black},
  stringstyle=\color{red!50!black},
  columns=fullflexible,
  keepspaces=true,
  breaklines=true,
  frame=single,
  showstringspaces=false,
  aboveskip=1em,
  belowskip=1em,
}

\title{Counterexamples to O'Neil's Period-Index Problem on Elliptic Curves}

\author{Xiaoguang Shang}
\address{Department of Mathematics \\ Nanjing University \\ 22 Hankou Road Nanjing 210093 China}
\email{xgshang@smail.nju.edu.cn}

\author{Cheng Niu}
\address{Department of Mathematics \\ Nanjing University \\ 22 Hankou Road Nanjing 210093 China}
\email{chengniu@smail.nju.edu.cn}
\keywords{elliptic curves, homogeneous spaces, period-index problem}
\subjclass[2010]{11G05, 11R34, 11S31}

\begin{document}
\maketitle

\begin{abstract}
Let $E/K$ be an elliptic curve over a number field $K$, and let $[C]\in H^1(K,E(\overline{K}))$ be a homogeneous space under $E$. Suppose that $C$ has period $n$ and index $d$.  O'Neil asked whether one can always choose a
lift of $[C]$ to $H^1(K,E[n])$ whose period-index obstruction has  order exactly
$d/n$.  We give a negative answer to this question by constructing an explicit family of examples.  Taking $K=\mathbb{Q}(\zeta_8)$, we prove that there exist infinitely many pairwise non-isomorphic elliptic
curves $E/K$ such that each $E$ admits infinitely many homogeneous spaces
$[C]\in H^1(K,E(\overline{K}))$ with period $8$ and index $16$, but the period-index
obstruction of every lift of $[C]$ to $H^1(K,E[8])$ has  order exactly $8$.
\end{abstract}

\section{Introduction}

Let $K$ be a field with absolute Galois group $G_K=\Gal(\overline{K}/K)$, and let $E/K$ be an elliptic curve over  $K$.  The Weil-Ch\^{a}telet group $\operatorname{WC}(E/K)$, which classifies the equivalence classes of homogeneous spaces of $E$, is canonically isomorphic to the Galois cohomology $H^1(K,E(\overline{K}))$ \cite[Proposition 4]{LangTate}. Under this isomorphism, each equivalence class of a homogeneous space $C$ of $E$ corresponds to a cohomology class in $H^1(K,E(\overline{K}))$, which we denote by $[C]$. The period of a homogeneous space $C$, denoted by $P(C)$, is defined as  the order of $[C]$ in the group $H^1(K,E(\overline{K}))$. The index of $C$, denoted by $I(C)$, is the smallest positive integer $d$ for which there exists a  $K$-rational divisor of degree $d$ on $C$.

It is well known that \(P(C)\) divides \(I(C)\) and that \(P(C)\) and \(I(C)\) have the same prime factors \cite[Proposition 5]{LangTate}. When \(K\) is a local field, Lichtenbaum \cite{Lichtenbaum} proved that \(P(C)=I(C)\) for every homogeneous space \(C\) of \(E/K\). In contrast, when \(K\) is a number field, the period and index do not necessarily coincide \cite{cassels}. In this case, the precise relationship between \(P(C)\) and \(I(C)\) remains mysterious \cite{ClarkSharif,ONeil,sharif1}. One approach to understanding this relationship is via O'Neil's period-index obstruction map. We  recall the following Kummer sequence for \(E/K\):
\begin{equation*}
	0\rightarrow E(K)/nE(K)\xrightarrow{\delta} H^1(K,E[n])\xrightarrow{\rho} H^1(K,E(\overline{K}))[n]\rightarrow 0.
\end{equation*}
O'Neil \cite{ONeil} introduced the period-index obstruction map
\begin{equation*}
	Ob_n: H^1(K,E[n])\rightarrow \Br(K),
\end{equation*}
and proved that, if \([C]\in H^1(K,E(\overline{K}))[n]\) satisfies \(P(C)=n\), then for any lift \(\xi\in H^1(K,E[n])\) of \([C]\), the index of \(C\) satisfies
\begin{equation}\label{equ}
	I(C)\leq P(C)\cdot \operatorname{order}(Ob_n(\xi)).
\end{equation}
In particular, we obtain
\begin{equation}\label{o2}
	\frac{I(C)}{P(C)}
	\leq
	\min_{\rho(\xi)=[C]} \operatorname{order}(Ob_n(\xi)).
\end{equation}
O'Neil \cite{ONeil} raised the question of whether the inequality \eqref{o2} is always an equality. Equivalently, if \(I(C)=d\), does there exist a lift \(\eta\in H^1(K,E[n])\) of \([C]\) such that the period-index obstruction \(Ob_n(\eta)\) has order \(d/n\) ?
Our main result gives a negative answer to this question by constructing an explicit family of examples.
\begin{theorem}\label{mainthm}
	Let $K=\mathbb{Q}(\zeta_8)$.
	Then there exists an infinite set \(\Omega\subset\mathbb{Q}\) and elliptic curves
	\(E_\lambda/\mathbb{Q}\), with pairwise distinct \(j\)-invariants for
	\(\lambda\in\Omega\), such that for every \(\lambda\in\Omega\) there are
	infinitely many homogeneous spaces $	[C]\in H^1(K,E_\lambda)$	satisfying
	\[
 P(C)=8,\qquad I(C)=16.
	\]
	Moreover, for every lift	$\eta\in H^1(K,E_\lambda[8])$	of \([C]\), the period-index obstruction class
	\(Ob_8(\eta)\) has  order exactly \(8\).
\end{theorem}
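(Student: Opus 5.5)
We will build the examples from curves with full rational $8$-torsion over $K$, so that $E[8]\cong(\mathbf Z/8)^2$ as a trivial Galois module. Then $H^1(K,E[8])\cong (K^\times/K^{\times 8})^2$. Since $\mu_8\subset K$, the Weil pairing identifies $E[8]\cong\mu_8\times\mathbf Z/8$, and O'Neil's obstruction is computed by a cup product. For $\xi=(a,b)$ we expect, up to a fixed sign and a linear term,
\[
Ob_8(\xi)=(a,b)_{\zeta_8},
\]
the cyclic algebra of degree $8$. This fits the known formula for $Ob_n$ as the quadratic form obtained from the Weil pairing cup product when $E[n]$ is split. Concretely, we take $\lambda\in\Omega$ and $E_\lambda$ from a rational family with $E_\lambda[8]$ rational over $K$; for instance, a parametrization of the modular curve $X(8)$ twisted so that the curves are defined over $\mathbf Q$. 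Distinct $j$-invariants are then automatic for infinitely many parameters. The first step is to verify the cup-product formula precisely, including the linear term, which may vanish or be controlled when $E[8]$ is split.

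The lifts of a fixed $[C]=\rho(\xi)$ are $\xi+\delta(P)$ with $P\in E(K)/8E(K)$. Hence
\[
Ob_8(\xi+\delta P)=Ob_8(\xi)+Ob_8(\delta P)+\langle\xi,\delta P\rangle = Ob_8(\xi)+\langle \xi,\delta P\rangle,
\]
using $Ob_8\circ\delta=0$ and bilinearity of the cup-product term. Choose $\xi=(a,b)$, where $a,b$ are supported at a set $S$ of primes of $K$ chosen so that the cyclic algebra $(a,b)_{\zeta_8}$ has local invariants of exact order $8$ at some places. Then choose them so that the corrections $\langle\xi,\delta P\rangle$, for $P$ running over $E(K)$, cannot lower the order. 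The way to arrange this is to ensure that for every $P$ the class $\langle \xi,\delta P\rangle$ has invariant divisible by $2$ (or vanishing) at some prime $\mathfrak p\in S$ where $Ob_8(\xi)$ has invariant of exact order $8$. Good reduction of $E$ at $\mathfrak p$ together with unramifiedness of $\delta P$ there makes the local Hilbert symbol pairing with a unit trivial, so the local invariant stays of order $8$. With infinitely many choices of such $S$, and the resulting $\rho(\xi)$ pairwise distinct, we obtain infinitely many $[C]$. Period exactly $8$ follows because $8\xi$-type considerations give $4[C]\neq0$; we check this via local nontriviality of $\rho(\xi)$ at some place, where Lichtenbaum's result and Tate local duality apply.

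The index statement $I(C)=16$ is the crux, and I expect it to be the main obstacle. Inequality \eqref{equ} only gives $I(C)\le 64$, so we need an independent construction of a degree-$16$ divisor together with a lower bound. For the lower bound, $I(C)\ge 16$, we use local–global considerations: if $I(C)=8$, some lift would have trivial obstruction. More precisely, a $K$-rational divisor of degree $8$ on $C$ gives a map $C\to\mathbf P^7$, which forces the corresponding $\Br$ class to vanish, and this contradicts the order-$8$ computation above. For the upper bound, I would look at $2[C]$, which has period $4$, and its lifts $2\xi$ to $H^1(K,E[4])$. Here $Ob_4(2\xi)$ is related to $4\cdot Ob_8(\xi)$-type classes of order $2$. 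We would then show that a degree-$16$ divisor exists by pushing through the $2$-covering $C\to C'$, where $C'$ represents $2[C]$, or by finding a quadratic extension $L/K$ over which the index drops to $8$; a corestriction then yields degree $16$. Choosing $L$ to split the obstruction's $2$-part at all places in $S$ (weak approximation) should give $I(C)\mid 2\cdot 8$. The genuinely hard point is reconciling this with the obstruction staying of order $8$ over $K$: the gap between $I/P=2$ and the minimal obstruction order $8$ is precisely the counterexample phenomenon.
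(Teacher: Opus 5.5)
Your plan has two genuine gaps, and the second is fatal.

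\textbf{The family and the local control.} The curves you want do not exist in the required quantity. An elliptic curve over $K$ with $E[8]\subset E(K)$ (and $\zeta_8\in K$) gives a $K$-point on $X(8)$. That curve has genus $5$, so by Faltings only finitely many $j$-invariants occur. There is no rational parametrization of $X(8)$, twisted or not, and the claim that distinct $j$-invariants are ``automatic'' fails.

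The paper works instead on the rational curve $X_1(8)$ (the Tate--Kubert family). There only a rational point $P$ of order $8$ is available, and $K(E_\lambda[8])/K$ is cyclic of degree $8$ with $\tau(Q)=Q+P$. This non-split structure is essential. It makes $2P$ the restriction to $H=K(E[8],[8]^{-1}P)$ of a central commutator $c\in\Gal(A/K)$. That is what allows a Chebotarev place $v$ with $\mathrm{Frob}_v|_H=2P$ while $\mathrm{Frob}_v$ is trivial on the ray class field $R_{\mathfrak m}$ and on $F_{\mathrm{MW}}$. The ray class condition is needed to get a generator $a$ of $\mathfrak p$ that is an eighth power at $S$. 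In a split situation, $H/K$ would be abelian and unramified outside $S$, hence contained in $R_{\mathfrak m}$, and these conditions would clash.

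Relatedly, your reason why the corrections $T(\xi,\kappa_8(R))$, $R\in E(K)$, cannot lower the order at $\mathfrak p$ is wrong. Your class must be ramified at $\mathfrak p$, and the tame symbol of a uniformizer against a unit is in general nontrivial. What works is to force $E(K)\subset 2E(K_v)$, via $\mathrm{Frob}_v|_{F_{\mathrm{MW}}}=1$, where $F_{\mathrm{MW}}$ contains halves of representatives of $E(K)/2E(K)$. Then every correction has local order dividing $4$, while $Ob_8(\eta)_v$ has exact order $8$.

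\textbf{The upper bound $I(C)\mid 16$.} You have no argument for this, and it is the entire content of the counterexample. The quadratic-extension idea fails as stated. Restriction to a quadratic $K'/K$ multiplies local invariants by the local degree, which is at most $2$. So a class of exact order $8$ at $\mathfrak p$ stays of order at least $4$. Index $8$ over $K'$ would then require new points of $E(K')$ to cancel it, and nothing in your setup controls them.

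The missing idea is to go up to level $16$ using the rational $8$-torsion point. By functoriality $Ob_{16}(j_*\eta)=2\,Ob_8(\eta)$, while $T(\kappa_{16}(P),j_*\eta)=T(\kappa_8(P),\eta)$. Hence the lift $j_*\eta-\kappa_{16}(P)$ of $[C]$ to $H^1(K,E[16])$ has obstruction $2\,Ob_8(\eta)-T(\eta,\kappa_8(P))$. One must therefore arrange $T(P,\eta)=2\,Ob_8(\eta)$. The paper does this in steps:
\begin{enumerate}
\item Lift the Kummer class of $a$ (a norm from $K(E[8])$) to a class $\eta_0$.
\item Compute $\inv_v T(\eta_0,P)=-2/8$ from $\mathrm{Frob}_v|_H=2P$.
\item Show the defect $T(\eta_0,\kappa_8(P)-\eta_0)$ lies in $\Br(N/K)$, where $N=K(\sqrt[8]{z/a^2})$ and $z=\pi_*\kappa_8(P)$.
\item Write minus this defect as a symbol $[\chi,z/a^2]_8$ and set $\eta=\eta_0+i_*(\chi)$.
\end{enumerate}
This yields a degree-$16$ divisor, while all level-$8$ lifts keep obstruction of order exactly $8$. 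Together with the criterion you correctly invoke (index $8$ would force a lift with $Ob_8=0$), this gives $I(C)=16$.
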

These curves in Theorem 1.1 arise from the Tate--Kubert family \cite{Kubert,x18} with a
marked point \(P\) of order \(8\).

We briefly explain the idea behind the construction of the homogeneous spaces. We construct infinitely many special cohomology classes
$
\eta\in H^1(K,E[8])
$
to control the period-index obstruction map. More precisely, the construction is arranged so that the Tate pairing \(T(P,\eta)\) has order exactly \(4\). Another key property is the identity
\begin{equation*}
2Ob_8(\eta)=T(P,\eta).
\end{equation*}
This gives a \(K\)-rational divisor of degree \(16\), and hence $I(C)\mid16$. Moreover, we use a method similar to that of   \cite{ClarkSharif} to ensure that every lift \(\eta\in H^1(K,E[8])\) of \([C]\) has period-index obstruction \(\operatorname{Ob}_8(\eta)\) of order exactly \(8\). This implies that the homogeneous space \(C\) has period exactly \(8\) and index exactly \(16\) (cf. Theorem \ref{22}).

\subsection{Organization of the paper}
The paper is organized as follows.

In Section 2, we introduce the definition and basic properties of the
period-index obstruction map and establish the necessary properties of
the Tate--Kubert family.

In Section 3, we construct cohomology classes satisfying certain
specified conditions and use them to construct the homogeneous space
\(C\).

In Section 4, we prove that the homogeneous space \(C\) satisfies the
conditions of Theorem \ref{mainthm}.

\subsection{Acknowledgments}
The counterexample presented in this paper was discovered with the assistance of OpenAI's
GPT-5.6 Sol model. All mathematical arguments and references were independently
verified by the authors.

\section{On the period-index obstruction map}
\subsection{Period-index obstruction map and Hilbert symbol}
Our main tool for computing the index is O'Neil's period-index obstruction map. In order to define it, we 
first recall the basic notion of theta group. For a positive integer $n$ not divisible by the characteristic of $K$, let $E[n]$ be the $n$-torsion points of the elliptic curve $E$.  Let $L=\mathcal{O}(nO_E)$ be the line bundle corresponding to the divisor $nO_E$, where $O_E$ is the identity of $E$. The theta group $\mathscr{G}_L$ is defined to be the set of pairs $(\varphi,\tau)$, where $\tau: E\rightarrow E$ is a translation and $\varphi $ is an isomorphism $\tau^* L\rightarrow L$. It is a group via
\[
(\varphi,\tau)\cdot(\varphi',\tau')=(\tau'^*(\varphi)\circ\varphi',\tau\circ\tau').
\]
Note that for our choice of $L$, $\tau$ must be a translation by an element of $E[n]$. Thus we have the following  exact sequence of $K$-group schemes
\[
0\rightarrow\mathbb{G}_m\rightarrow \mathscr{G}_L\rightarrow E[n]\rightarrow 0
\]
The subgroup $\mathbb{G}_m$ corresponds to the set $(a,id)$, where $id$ is the identity map and $a$ denotes multiplication by the constant $a$. The quotient map to $E[n]$ sends $(\varphi,\tau)$ to $\tau(O)$.

We consider the \(\overline{K}\)-points of the group schemes and take
(non-abelian) Galois cohomology. This gives rise to a coboundary map
\[
Ob_n:H^1(K,E[n])\longrightarrow H^2(K,\mathbb{G}_m)[n]=\operatorname{Br}(K)[n].
\]
This map is called the period-index obstruction map. In what follows, we
omit the subscript \(n\) from \(Ob_n\) when it is clear from the context.

The period-index obstruction map satisfies the following functoriality.
\begin{theorem}\label{cd}
	Let $n$ and $m$ be positive integers. The following diagram commutes:
	\begin{equation}
		\begin{tikzcd}
		H^1(K,E[n])\ar[r,"Ob_n"]\ar[d,"j_*"]	&\Br(K)\ar[d,"m"] \\
		H^1(K,E[mn])\ar[r,"Ob_{mn}"]	& \Br (K)
		\end{tikzcd}
	\end{equation}
where $j_*$ is induced by the natural inclusion $j:E[n]\rightarrow E[mn] $, and $m$ is multiplication by $m$.
\end{theorem}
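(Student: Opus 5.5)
We prove the commutativity of the diagram in Theorem \ref{cd} by comparing the two theta-group extensions defining \(Ob_n\) and \(Ob_{mn}\). Put \(L_n=\mathcal{O}(nO_E)\) and \(L_{mn}=\mathcal{O}(mnO_E)=L_n^{\otimes m}\).

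\textbf{Step 1: the comparison homomorphism.} There is a natural map of \(K\)-group schemes
\[
\psi:\mathscr{G}_{L_n}\to\mathscr{G}_{L_{mn}},\qquad (\varphi,\tau)\mapsto(\varphi^{\otimes m},\tau).
\]
It is well defined because \(\tau^*L_{mn}=(\tau^*L_n)^{\otimes m}\), so \(\varphi^{\otimes m}\) is an isomorphism \(\tau^*L_{mn}\to L_{mn}\). It is a homomorphism because tensor powers commute with pullback and composition:
\[
(\tau'^*\varphi\circ\varphi')^{\otimes m}=\tau'^*(\varphi^{\otimes m})\circ\varphi'^{\otimes m}.
\]
It is Galois-equivariant, since it is defined over \(K\).

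\textbf{Step 2: compatibility with the extensions.} On the central \(\mathbb{G}_m\), the map \(\psi\) sends \((a,\mathrm{id})\) to \((a^m,\mathrm{id})\), so it restricts to the \(m\)-th power map. On the quotients it induces \(\tau(O)\mapsto\tau(O)\), which is the inclusion \(j:E[n]\to E[mn]\). We therefore get a morphism of central extensions
\[
\begin{tikzcd}
0\ar[r]&\mathbb{G}_m\ar[r]\ar[d,"m"]&\mathscr{G}_{L_n}\ar[r]\ar[d,"\psi"]&E[n]\ar[r]\ar[d,"j"]&0\\
0\ar[r]&\mathbb{G}_m\ar[r]&\mathscr{G}_{L_{mn}}\ar[r]&E[mn]\ar[r]&0
\end{tikzcd}
\]

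\textbf{Step 3: naturality of the coboundary.} The connecting map \(H^1(K,A)\to H^2(K,Z)\) for a central extension \(1\to Z\to G\to A\to 1\) is natural with respect to morphisms of central extensions. This can be checked on cocycles. Take a cocycle \(\xi_\sigma\) for \(E[n]\) and lift it to \(g_\sigma\in\mathscr{G}_{L_n}(\overline{K})\). The obstruction is the 2-cocycle
\[
c_{\sigma,\tau}=g_\sigma\,{}^\sigma g_\tau\,g_{\sigma\tau}^{-1}\in\overline{K}^*.
\]
Then \(\psi(g_\sigma)\) lifts \(j(\xi_\sigma)\), and by equivariance of \(\psi\) the associated 2-cocycle is \(\psi(c_{\sigma,\tau})=c_{\sigma,\tau}^m\). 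Hence
\[
Ob_{mn}(j_*\xi)=m\cdot Ob_n(\xi)\quad\text{in }\Br(K),
\]
which is the claimed commutativity.

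\textbf{Main obstacle.} The only delicate point is Step 1. One must check that \(\psi\) is truly a morphism of group schemes, not just a map on \(\overline{K}\)-points compatible with Galois. One must also check that the identification \(L_{mn}=L_n^{\otimes m}\) is canonical, so that no sign or scalar ambiguity enters. If O'Neil's normalization of \(Ob_n\) involves an extra choice of convention, we will verify that the same convention is used for both \(n\) and \(mn\). Since both sides of the diagram are built from the same sequence, this should be harmless.
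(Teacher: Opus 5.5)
Your proof is correct. Note that the paper gives no proof of this statement: it is asserted as a known functoriality property of O'Neil's obstruction map, and is used only with $n=4$, $m=2$ in the proof of Theorem~\ref{22}. Your argument is the standard one. The map $(\varphi,\tau)\mapsto(\varphi^{\otimes m},\tau)$ is a morphism of central extensions from $\mathscr{G}_{L_n}$ to $\mathscr{G}_{L_{mn}}$. It induces $a\mapsto a^m$ on $\mathbb{G}_m$ and $j$ on the quotients. Naturality of the nonabelian coboundary then sends the obstruction cocycle $c_{\sigma,\tau}$ to $c_{\sigma,\tau}^m$, which gives $Ob_{mn}(j_*\xi)=m\,Ob_n(\xi)$.

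The concerns in your last paragraph are not real obstacles.
\begin{itemize}
\item \textbf{Group-scheme morphism.} The paper defines $Ob_n$ through Galois cohomology of $\overline{K}$-points. So all you need is a $G_K$-equivariant homomorphism on $\overline{K}$-points, and this holds because $L_n$ and $L_{mn}$ are defined over $K$.
\item \textbf{Scalar ambiguity.} The identification $\mathcal{O}(nO_E)^{\otimes m}=\mathcal{O}(mnO_E)$ by multiplication of sections is canonical and defined over $K$. Even without that, an isomorphism $\alpha\colon L_n^{\otimes m}\to L_{mn}$ induces the map $(\varphi,\tau)\mapsto(\alpha\circ\varphi\circ(\tau^*\alpha)^{-1},\tau)$ on theta groups. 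Replacing $\alpha$ by $c\alpha$ for a scalar $c$ leaves this map unchanged, since $\tau^*(c\alpha)=c\,\tau^*\alpha$. So no scalar or sign ambiguity can enter.
\end{itemize}

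There is a shorter alternative using the paper's second description, $Ob_n(C,D)=\delta_C([D])$.
\begin{itemize}
\item $H^1(K,E[n])$ classifies twists of the pair $(E,[nO_E])$, whose automorphism group is $E[n]$.
\item The functorial operation $(C,[D])\mapsto(C,[mD])$ sends the base pair to $(E,[mnO_E])$ and induces $j\colon E[n]\hookrightarrow E[mn]$ on automorphism groups. Hence on twists it is exactly $j_*$.
\item Since $\delta_C$ is a group homomorphism, $Ob_{mn}(j_*\xi)=\delta_C([mD])=m\,\delta_C([D])$.
\end{itemize}
This is a one-liner once the identification with $\delta_C$ is granted. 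However, that identification is imported from Clark and is itself proved by a theta-group cocycle argument. Your proof works directly from the definition of $Ob_n$ given in the paper, so it is the more self-contained of the two.
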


We will also need another equivalent definition. Let $C$ be a homogeneous space of $E$ and let $\operatorname{Pic}(C)$ be the Picard group of $C$. There is an exact sequence \cite[Section 3.1]{clark}
\begin{equation}\label{es}
	0\rightarrow \operatorname{Pic}(C)\rightarrow H^0(K,\operatorname{Pic}(C_{\overline{K}}))\xrightarrow{\delta_C} \Br(K)
\end{equation}
On the other hand, a Galois descent argument \cite[Proposition 1.9]{CFOSS1} shows that $H^1(K,E[n])$ classifies pairs $(C,D)$, where $[C]\in H^1(K,E(\overline{K}))$ and $D\in \Pic^n(C)(K)$ is a $K$-rational divisor class of degree $n$. Then the argument in  \cite[Section 3.1]{clark} shows
\[
Ob_n(C,D)=\delta_C([D]).
\]
This equality gives an explanation of the relationship between the obstruction map and the index. In \cite{ONeil}, O'Neil showed
\begin{proposition}\label{neil}
	 Let $C$ be  a homogeneous space of $E$ with period dividing $n$ and let $\xi\in H^1(K,E[n])$ be any lift of $C$. If $Ob_n(\xi)$ has order $l$, then the index of $C$ divides $nl$.
\end{proposition}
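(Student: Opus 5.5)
The plan is to use the description of $H^1(K,E[n])$ as pairs $(C,D)$ together with the exact sequence \eqref{es}. Write $\xi=(C,D)$, where $D\in\Pic^n(C)(K)=H^0(K,\Pic^n(C_{\overline K}))$ is a $K$-rational divisor class of degree $n$. Then $Ob_n(\xi)=\delta_C([D])$. The two things to show are:
\begin{enumerate}[label=(\roman*)]
\item $l D$ lifts to an honest $K$-rational divisor on $C$;
\item $nl$ is divisible by the index.
\end{enumerate}

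First I check that $\delta_C$ is a group homomorphism $H^0(K,\Pic(C_{\overline K}))\to\Br(K)$. It arises as the connecting map of the Hochschild--Serre spectral sequence, or equivalently from
\[
0\to \overline K(C)^\times/\overline K^\times\to \operatorname{Div}(C_{\overline K})\to \Pic(C_{\overline K})\to 0
\]
combined with $0\to\overline K^\times\to\overline K(C)^\times\to \overline K(C)^\times/\overline K^\times\to 0$. Hence $\delta_C(l[D])=l\,\delta_C([D])=l\,Ob_n(\xi)=0$, since $Ob_n(\xi)$ has order $l$.

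By exactness of \eqref{es}, the class $l[D]\in H^0(K,\Pic(C_{\overline K}))$ then comes from $\Pic(C)$. So there is a $K$-rational divisor (class) on $C$ whose degree is $\deg(lD)=nl$. Here I use that $\Pic(C)\to H^0(K,\Pic(C_{\overline K}))$ preserves degree, and that an element of $\Pic(C)$ is represented by a $K$-rational divisor, which is the definition of $\Pic(C)$ as $\operatorname{Div}(C)/\text{principal}$.

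Finally, the set of degrees of $K$-rational divisors on $C$ is a subgroup of $\mathbf Z$. By the definition of the index it is exactly $I(C)\mathbf Z$: it contains $I(C)$, and any degree $e$ can be reduced modulo $I(C)$ to a smaller positive degree unless $I(C)\mid e$. Therefore $I(C)\mid nl$.

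The only point needing care is the identification $Ob_n(C,D)=\delta_C([D])$ together with the homomorphism property of $\delta_C$; both are quoted from \cite{clark} and the spectral sequence. I expect no genuine obstacle beyond making sure the degree-$n$ class $D$ lives in $H^0(K,\Pic(C_{\overline K}))$ compatibly with the group structure. Note that $\Pic^n$ itself is a torsor rather than a group, which is why we view $D$ inside the full $\Pic(C_{\overline K})$.
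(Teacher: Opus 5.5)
Your proof is correct, but it takes a different route from the paper. The paper follows O'Neil's original argument. It chooses a field extension $F/K$ of degree $l$ that splits $\delta_C([D])$, appealing to Brauer--Severi theory. Over $F$ the obstruction vanishes, so $[D]$ is represented by an $F$-rational divisor $D_F$ of degree $n$. Summing the $l$ conjugates of $D_F$ under $\operatorname{Hom}_K(F,\overline K)$ then gives a $K$-rational divisor of degree $nl$.

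You never leave $K$. You use only that $\delta_C$ is a homomorphism, so $\delta_C(l[D])=l\,\delta_C([D])=0$. By exactness of \eqref{es}, the class $l[D]$ itself then comes from $\Pic(C)$, i.e.\ from an honest $K$-rational divisor of degree $nl$.

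What your approach buys:
\begin{itemize}
\item It is shorter.
\item It works over an arbitrary field, with $l$ the order (period) of $Ob_n(\xi)$.
\end{itemize}
The paper's argument naturally proves divisibility by $n$ times the degree of a splitting field, i.e.\ $n$ times the index of the Brauer class. The existence of a splitting field of degree exactly the order $l$ uses that period equals index for Brauer classes over number fields. That fact is available in the paper's setting, but your argument does not need it.

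What the paper's approach buys is an explicit description of the degree-$nl$ divisor as a norm of a divisor from $C_F$, which fits the Brauer--Severi picture of the obstruction. For the use made of this proposition in Theorem~\ref{22}, where the obstruction $Ob_{16}$ vanishes ($l=1$), the two arguments coincide.
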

  For later use, we recall O'Neil's argument. If  $Ob_n(C,D)=\delta_C([D])$ is trivial, then by exact sequence \eqref{es}, the $K$-rational divisor class $[D]$ lifts to an element of $\Pic(C)$, yielding a $K$-rational divisor $D$ of degree $n$ on $C$. More generally, suppose that the 
\[
\operatorname{order}(Ob_n(C,D))=\operatorname{order}(\delta_C([D]))=l.
\]
By the theory of Brauer--Severi varieties, there exists a field extension
\(F/K\) of degree \(l\) that splits \(\delta_C([D])\). Hence, the restriction
of \(Ob_n(C,D)\) to \(F\) is trivial. This implies that \(C\) admits an
\(F\)-rational divisor \(D_F\) of degree \(n\). Taking the sum of the $l$-conjugates of $D_F$ under $\operatorname{Hom}_K(F,\overline{K})$, we obtain a $K$-rational divisor of degree $nl$ on $C$. This shows that the index of $C$ divides $n\cdot \operatorname{order}(Ob_n(C,D))$.
   
 We also need the following Tate pairing to compute the period-index
 obstruction map:
 \begin{equation}
 	T(-,-): H^1(K,E[n])\times H^1(K,E[n])
 	\xrightarrow{\cup}
 	H^2(K,E[n]\otimes E[n])
 	\xrightarrow{e_*}
 	H^2(K,\mu_n),
 \end{equation}
 where the map \(e_*\) is induced by the Weil pairing, and the latter
 group \(H^2(K,\mu_n)\) is canonically isomorphic to \(\Br(K)[n]\). The pairing \(T\) is bilinear and symmetric, since the Weil pairing \(e_n\) is alternating and interchanging the two degree-one cup-product factors introduces a second minus sign.
 
 Let $
 \delta_n:E(K)/nE(K)\longrightarrow H^1(K,E[n])$
 be the connecting homomorphism in the Kummer sequence of \(E\), and let $
 j_*:H^1(K,E[n])\longrightarrow H^1(K,E[mn])$
 be the map induced by the natural inclusion \(E[n]\hookrightarrow E[mn]\). We have the following compatibility of the Tate pairing.
 \begin{proposition}\label{tate2}
 	For every \(P\in E(K)\) and \(\xi\in H^1(K,E[n])\), we have
 	\[
 	T(\delta_n(P),\xi)
 	=
 	T(\delta_{mn}(P),j_*(\xi)).
 	\]
 \end{proposition}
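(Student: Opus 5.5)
The plan is a cocycle-level comparison. Both sides are cup products followed by a Weil pairing, so it suffices to compare $\delta_n(P)$ with $\delta_{mn}(P)$ and the Weil pairings $e_n$ with $e_{mn}$.

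\textbf{Step 1: compare the two coboundary cocycles.} Choose $Q\in E(\overline{K})$ with $mnQ=P$ and set $R=mQ$, so that $nR=P$. Then $\delta_{mn}(P)$ is represented by $\sigma\mapsto \sigma Q-Q\in E[mn]$, and $\delta_n(P)$ by $\sigma\mapsto \sigma R-R = m(\sigma Q-Q)$. Hence
\[
\delta_n(P)=[m]_*\,\delta_{mn}(P),
\]
where $[m]:E[mn]\to E[n]$ is multiplication by $m$.

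\textbf{Step 2: the pairing compatibility.} I will use the standard identity
\[
e_{mn}(S,j(T))=e_n(mS,T)\qquad\text{for } S\in E[mn],\ T\in E[n],
\]
where $e_n$ takes values in $\mu_n\subset\mu_{mn}$. This follows from $e_{mn}(S,T')^m=e_n(mS,T')$ for $T'\in E[n]$, or equivalently from the compatibility $e_{mn}(S,T)=e_n(mS,T)$ in Silverman III.8.1(e). The direction of the maps matters here, so I will check it carefully against the precise statement used.

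\textbf{Step 3: compute the left side.} By Step 1,
\[
T(\delta_n(P),\xi)=e_{n*}\bigl([m]_*\delta_{mn}(P)\cup\xi\bigr).
\]
Naturality of the cup product gives
\[
([m]\otimes 1)_*\bigl(\delta_{mn}(P)\cup\xi\bigr)\in H^2(K,E[n]\otimes E[n]).
\]
Composing with $e_n$ gives the bilinear map $E[mn]\otimes E[n]\to\mu_n$, $S\otimes T\mapsto e_n(mS,T)$. By Step 2 this equals $S\otimes T\mapsto e_{mn}(S,jT)$, viewed in $\mu_n\subset\mu_{mn}$.

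\textbf{Step 4: compute the right side.} Similarly,
\[
T(\delta_{mn}(P),j_*\xi)=e_{mn*}\bigl((1\otimes j)_*(\delta_{mn}(P)\cup\xi)\bigr).
\]
So both sides are the image of the single class $\delta_{mn}(P)\cup\xi\in H^2(K,E[mn]\otimes E[n])$ under the same pairing into $\mu_n$. The only difference is the target group: on the left it lands in $H^2(K,\mu_n)$, on the right in $H^2(K,\mu_{mn})$ via the inclusion $\mu_n\hookrightarrow\mu_{mn}$.

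\textbf{Step 5: identify the targets.} Under Kummer theory, the inclusion $\mu_n\hookrightarrow\mu_{mn}$ induces the inclusion $\Br(K)[n]\subset\Br(K)[mn]$. This is because both groups map compatibly into $H^2(K,\mathbb{G}_m)$ via $\mu_k\to\mathbb{G}_m$. So the two classes agree in $\Br(K)$, which proves the proposition.

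\textbf{Main obstacle.} The delicate point is Step 2 together with Step 5: getting the Weil pairing compatibility with the correct direction of maps (multiplication by $m$ on one side, inclusion on the other), and confirming that no spurious factor of $m$ appears when the $\mu_n$-valued class is compared with the $\mu_{mn}$-valued class in $\Br(K)$. I would settle this first with the explicit formula $e_{mn}(S,T)=e_n(mS,T)$ for $T\in E[n]$, $S\in E[mn]$.
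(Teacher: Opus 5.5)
Your argument is correct and is essentially the paper's proof. The paper also chooses $R$ with $mnR=P$ and uses $\sigma(mR)-mR=m(\sigma R-R)$ together with the compatibility $e_{mn}(S,T)=e_n(mS,T)$ for $S\in E[mn]$, $T\in E[n]$ to match the two $2$-cocycles term by term. Your naturality-of-cup-product phrasing and your Step 5 (that $H^2(K,\mu_n)\to H^2(K,\mu_{mn})$ is compatible with $\Br(K)[n]\subset\Br(K)[mn]$) only make explicit what the paper leaves implicit.

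One small slip in Step 2: the alternative identity $e_{mn}(S,T')^m=e_n(mS,T')$ is false in general, since by the compatibility its left side equals $e_n(mS,T')^m$. The correct identity is the one without the exponent, and that is the one you actually use.
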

\begin{proof}
Let \(\alpha\) and \(\beta\) be the corresponding \(2\)-cocycles
representing
$T(\delta_n(P),\xi)$ and $T(\delta_{mn}(P),j_*(\xi))$,
respectively. We continue to denote by
\(\xi\in Z^1(G_K,E[n])\) a \(1\)-cocycle representative of the
cohomology class \(\xi\).

Choose \(Q,R\in E(\overline{K})\) such that $nQ=P, mR=Q.$ Then \(mnR=P\). By the definition of the Tate pairing, for any \(\sigma,\tau\in G_K\), we have
\[
\begin{aligned}
	\beta(\sigma,\tau)
	&=e_{mn}(R^\sigma-R,\sigma(j_*(\xi(\tau))))\\
	&=e_{mn}(R^\sigma-R,\sigma(\xi(\tau)))\\
	&=e_n(mR^\sigma-mR,\sigma(\xi(\tau)))\\
	&=e_n(Q^\sigma-Q,\sigma(\xi(\tau)))\\
	&=\alpha(\sigma,\tau).
\end{aligned}
\]
\end{proof}
 We adopt the following convention: whenever \(\xi=\delta(P)\) for some \(P\in E(K)\), we denote \(T(\delta(P),\eta)\) simply by \(T(P,\eta)\).
\begin{proposition}\label{ff}\cite[Proposition 4.1, Proposition 4.3]{ONeil}
 The period-index obstruction map $Ob$ is quadratic, that is,
 $
  Ob(a\xi)=a^2Ob(\xi)
  $
  for $a\in \mathbb{Z}$, and, for all $\xi,\eta \in H^1(K,E[n])$, 
	\begin{equation}
		T(\xi,\eta)=Ob(\xi+\eta)-Ob(\xi)-Ob(\eta).
	\end{equation}
In particular, 
$
T(\xi,\xi)=2Ob(\xi).
$
\end{proposition}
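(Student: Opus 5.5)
The plan is to work at the level of cocycles, using the theta group extension $0\to\mathbb{G}_m\to\mathscr{G}_L\to E[n]\to 0$ together with the fact that its commutator pairing is the Weil pairing $e_n$. I would first prove the polarization identity $T(\xi,\eta)=Ob(\xi+\eta)-Ob(\xi)-Ob(\eta)$, and then deduce the quadratic property from it together with the symmetry $Ob(-\xi)=Ob(\xi)$.

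\textbf{Step 1 (setup).} Fix a set-theoretic section $s:E[n]\to\mathscr{G}_L(\overline{K})$ of the projection. For $x,y\in E[n]$ and $\sigma\in G_K$, write
\[
s(x)s(y)=c(x,y)\,s(x+y),\qquad \sigma(s(x))=\lambda_\sigma(x)\,s(\sigma x),
\]
with $c(x,y),\lambda_\sigma(x)\in\overline{K}^\times$. The commutator relation reads $s(x)s(y)=e_n(x,y)\,s(y)s(x)$, up to the sign convention fixed for $T$. For a $1$-cocycle $\xi$, the class $Ob(\xi)$ is represented by the $2$-cocycle
\[
(\sigma,\tau)\mapsto s(\xi_\sigma)\,\sigma\bigl(s(\xi_\tau)\bigr)\,s(\xi_{\sigma\tau})^{-1}
=\lambda_\sigma(\xi_\tau)\,c(\xi_\sigma,\sigma\xi_\tau).
\]

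\textbf{Step 2 (polarization).} Write down the same expression for $\xi+\eta$ and expand. Use $s(x+y)=c(x,y)^{-1}s(x)s(y)$, and reorder the four factors $s(\xi_\sigma),s(\eta_\sigma),\sigma s(\xi_\tau),\sigma s(\eta_\tau)$ into the order $s(\xi_\sigma)\,\sigma s(\xi_\tau)\cdot s(\eta_\sigma)\,\sigma s(\eta_\tau)$. The single transposition of $s(\eta_\sigma)$ past $\sigma s(\xi_\tau)$ produces the factor $e_n(\eta_\sigma,\sigma\xi_\tau)^{\pm1}$, which is exactly the cup-product cocycle representing $T(\eta,\xi)=T(\xi,\eta)$.

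The main obstacle is the leftover scalars, namely the $c$-terms and the $\lambda$-terms, since $s$ is not Galois-equivariant. Two facts should handle them. First, the cochain $f(\sigma)=c(\xi_\sigma,\eta_\sigma)$ contributes precisely its coboundary. Second, applying $\sigma$ to $s(x)s(y)=c(x,y)s(x+y)$ gives
\[
\lambda_\sigma(x)\lambda_\sigma(y)\,c(\sigma x,\sigma y)=\sigma(c(x,y))\,\lambda_\sigma(x+y),
\]
which converts $\lambda_\sigma(\xi_\tau+\eta_\tau)$ into $\lambda_\sigma(\xi_\tau)\lambda_\sigma(\eta_\tau)$ at the cost of $c$-terms. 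I expect all remaining factors to telescope into $df$, so that the two sides agree in $H^2(K,\mathbb{G}_m)$. I would do this bookkeeping carefully, since sign and order conventions are the real risk here.

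\textbf{Step 3 (quadraticity).} Since $L=\mathcal{O}(nO_E)$ is symmetric, $[-1]^*L\cong L$. Choosing this isomorphism over $K$ gives a $K$-automorphism of $\mathscr{G}_L$ that is the identity on $\mathbb{G}_m$ and lifts $[-1]$ on $E[n]$. By functoriality of the coboundary map, this yields $Ob(-\xi)=Ob(\xi)$. Since $Ob(0)=0$, Step 2 applied to $\xi$ and $-\xi$ gives
\[
0=T(\xi,-\xi)+2Ob(\xi)=-T(\xi,\xi)+2Ob(\xi),
\]
so $T(\xi,\xi)=2Ob(\xi)$. Finally, induction on $a\ge0$ using
\[
Ob((a+1)\xi)=Ob(a\xi)+Ob(\xi)+a\,T(\xi,\xi)=(a^2+2a+1)\,Ob(\xi)
\]
gives $Ob(a\xi)=a^2Ob(\xi)$ for $a\ge0$. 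Evenness of $Ob$ then handles negative $a$.
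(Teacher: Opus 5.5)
Your argument is correct, and it takes a different route from the paper only in the sense that the paper gives no proof at all: it quotes O'Neil's Propositions 4.1 and 4.3. What you supply is a self-contained derivation that isolates the two inputs the citation leaves implicit.

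\begin{itemize}
\item The commutator pairing of $\mathscr{G}_L$ is the Weil pairing, and this produces the polar form $T$.
\item The symmetry $[-1]^*L\cong L$ gives $Ob(-\xi)=Ob(\xi)$.
\end{itemize}

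The second input is genuinely needed. Polarization alone only gives $Ob(a\xi)=a\,Ob(\xi)+\binom{a}{2}T(\xi,\xi)$, so quadraticity is equivalent to evenness. Your $\delta_{-1}$ also does not depend on the scalar normalization of $\psi$, so it is automatically Galois-equivariant.

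Two small remarks.

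First, the bookkeeping you flag as the main risk in Step 2 disappears if you lift $(\xi+\eta)_\sigma$ by $s(\xi_\sigma)s(\eta_\sigma)$ instead of $s(\xi_\sigma+\eta_\sigma)$. Because the $Ob_\eta$-cocycle takes values in the central $\mathbb{G}_m$, moving $s(\eta_\sigma)$ past $\sigma(s(\xi_\tau))$ gives exactly the product of the $Ob_\xi$-cocycle, the $Ob_\eta$-cocycle and $e_n(\eta_\sigma,\sigma\xi_\tau)^{\pm1}$. No $c$- or $\lambda$-terms appear; your $df$ is just the effect of changing lifts.

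Second, your argument leaves the sign $\pm1$ open. It is fixed only by checking that $\tilde x\tilde y\tilde x^{-1}\tilde y^{-1}=e_n(x,y)$, rather than $e_n(x,y)^{-1}$, in $\mathscr{G}_L$ for the normalization of $e_n$ used in $T$. With the other normalization you would get $-T$, and $T(\xi,\xi)=-2Ob(\xi)$. The paper does not settle this either, but it is harmless downstream. With the opposite sign one gets $\beta=-2\alpha$, but the cross term in the expansion of $Ob_{16}(j_*(\eta)-\kappa_{16}(P))$ flips as well, so that class still vanishes.
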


From now on, we assume that $K$ contains a primitive
$n$-th root of unity $\zeta_n$ and that $E/K$ is an elliptic curve with a
$K$-rational point $P\in E[n](K)$ of order exactly $n$. We identify
$\langle P\rangle$ with $\mathbb{Z}/n\mathbb{Z}$. By the Kummer
sequence, we have canonical isomorphisms
\[
K^*/K^{*n}\cong H^1(K,\langle P\rangle)\cong H^1(K,\mathbb{Z}/n\mathbb{Z})
\]
and
\[
H^2(K,\langle P\rangle)\cong H^2(K,\mathbb{Z}/n\mathbb{Z})\cong Br(K)[n].
\]
Taking the Weil pairing with $P$ gives the following short exact
sequence:
\begin{align}\label{aa}
	0\longrightarrow \langle P\rangle \longrightarrow E[n]
	&\longrightarrow \mu_n\longrightarrow 0,\\
	Q&\longmapsto e(P,Q).
\end{align}
Passing to Galois cohomology, we obtain the exact sequence 
\begin{equation}\label{bb}
 H^1(K,\mathbb{Z}/n\mathbb{Z})\xrightarrow{i_*} H^1(K,E[n])\xrightarrow{\pi_*} H^1(K,\mu_n)\cong K^*/K^{*n}\xrightarrow{\delta} H^2(K,\mathbb{Z}/n\mathbb{Z})\cong \Br(K)[n].
	\end{equation}	

We first recall the Hilbert symbol, and then establish its relationship with the Tate pairing.
The natural pairing  
\begin{align*}
	\mathbb{Z}/n\mathbb{Z}\times \mu_n\rightarrow \mu_n,\\
	 (a,\zeta)\mapsto \zeta^a,
\end{align*}
induces the following cup product pairing:
\begin{equation*}
	H^1(K,\mathbb{Z}/n\mathbb{Z})\times H^1(K,\mu_n)\rightarrow H^2(K,\mathbb{Z}/n\mathbb{Z}\otimes \mu_n)\rightarrow H^2(K,\mu_n).
\end{equation*}
We refer to this pairing as the Hilbert symbol, denoted by $[-,-]$ (see \cite[Chapter XIV]{Serre}). 
\begin{proposition}\label{tate}
	For any character $\chi\in H^1(K,\mathbb{Z}/n\mathbb{Z})$ and $\xi \in H^1(K,E[n])$, we have 
	\begin{equation}
		T(i_*(\chi),\xi)=[\chi,\pi_*(\xi)].
	\end{equation}
\end{proposition}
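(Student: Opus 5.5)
This is a cocycle-level computation: the Tate pairing with $i_*(\chi)$ should collapse to the cup product $\chi\cup\pi_*(\xi)$ once the Weil pairing is rewritten through the map $\pi$ of \eqref{aa}. I will work with cocycles and use the same convention for $T$ as in Proposition \ref{tate2}: a cup product $(\alpha\cup\beta)(\sigma,\tau)=\alpha(\sigma)\otimes\sigma\beta(\tau)$, followed by $e_n$.

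\textbf{Cocycle for $i_*(\chi)$.} Let $\chi\in Z^1(G_K,\mathbb{Z}/n\mathbb{Z})$ be a homomorphism. Since $P$ is $K$-rational and $\langle P\rangle\cong\mathbb{Z}/n\mathbb{Z}$ via $a\mapsto aP$, the class $i_*(\chi)$ is represented by the cocycle $\sigma\mapsto \chi(\sigma)P$.

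\textbf{Cocycle for $T$.} Let $\xi\in Z^1(G_K,E[n])$. Then
\[
T(i_*(\chi),\xi)(\sigma,\tau)=e_n\bigl(\chi(\sigma)P,\ \sigma\xi(\tau)\bigr)=e_n\bigl(P,\sigma\xi(\tau)\bigr)^{\chi(\sigma)},
\]
using bilinearity of $e_n$.

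\textbf{Rewriting via $\pi$.} By definition of the map $E[n]\to\mu_n$ in \eqref{aa}, we have $e_n(P,\sigma\xi(\tau))=\pi(\sigma\xi(\tau))$. This map is Galois-equivariant, because $P$ is rational and the Weil pairing is Galois-equivariant. Hence $\pi(\sigma\xi(\tau))=\sigma\bigl(\pi(\xi(\tau))\bigr)$, and $\pi\circ\xi$ is a cocycle representing $\pi_*(\xi)$. The expression therefore equals $\bigl(\sigma\,\pi_*\xi(\tau)\bigr)^{\chi(\sigma)}$. This is exactly the cocycle for $\chi\cup\pi_*(\xi)$ under the pairing $\mathbb{Z}/n\mathbb{Z}\otimes\mu_n\to\mu_n$, $(a,\zeta)\mapsto\zeta^a$, that is, for the Hilbert symbol $[\chi,\pi_*(\xi)]$.

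\textbf{Main obstacle: conventions.} The only real issue is sign bookkeeping. One must check that the order of arguments in the cup product matches in both definitions, and whether $\pi$ is $e(P,Q)$ or $e(Q,P)$; the paper specifies $Q\mapsto e(P,Q)$, so the computation above matches. If a different convention were used, the identity would hold up to the sign $-1$, coming from antisymmetry of $e_n$. I would also note the identifications $H^2(K,\mu_n)\cong\operatorname{Br}(K)[n]$ used on the two sides; they coincide since both targets are $H^2(K,\mu_n)$ with the same identification.
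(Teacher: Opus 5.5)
Your proof is correct and is essentially the paper's argument: both represent $i_*(\chi)$ by $\sigma\mapsto\chi(\sigma)P$, expand the Tate-pairing cocycle as $e(\chi(\sigma)P,\sigma\xi(\tau))=\sigma\bigl(e(P,\xi(\tau))\bigr)^{\chi(\sigma)}$, and recognize this as the cocycle of $\chi\cup\pi_*(\xi)$ under $(a,\zeta)\mapsto\zeta^a$. Your explicit remark that $e(P,-)$ is Galois-equivariant because $P$ is $K$-rational is the one step the paper uses without comment.
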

\begin{proof}
	Let $\chi\in Z^1(K,\mathbb{Z}/n\mathbb{Z})$ and
	$\xi\in Z^1(K,E[n])$ be $1$-cocycle representatives of the
	corresponding cohomology classes. By the definition of the maps in the
	exact sequence \eqref{aa}, we have
	$
	i_*(\chi)(\sigma)=\chi(\sigma)P
	$
	and
	$
	\pi_*(\xi)(\tau)=e(P,\xi(\tau)).
	$
	Let $\alpha$ and $\beta$ be the corresponding $2$-cocycles
	representing $T(i_*(\chi),\xi)$ and $[\chi,\pi_*(\xi)]$,
	respectively. Then
	\begin{align*}
		\alpha(\sigma,\tau)
		&=e(i_*(\chi)(\sigma),\sigma(\xi(\tau)))\\
		&=e(\chi(\sigma)P,\sigma(\xi(\tau)))\\
		&=\sigma(e(P,\xi(\tau)))^{\chi(\sigma)}.
	\end{align*}
	On the other hand, by the definition of the Hilbert symbol, we have
	$
	\beta(\sigma,\tau)
	=\sigma(e(P,\xi(\tau)))^{\chi(\sigma)}.
	$

	Hence $\alpha=\beta$, and therefore $	T(i_*(\chi),\xi)=[\chi,\pi_*(\xi)].$
\end{proof}
 	Recall the following short exact sequence:
\begin{equation*}
	0\longrightarrow \langle P\rangle
	\longrightarrow E[n]
	\xrightarrow{e(P,-)}
	\mu_n
	\longrightarrow 0.
\end{equation*}
Let $c\in H^1(K,\mathbb{Z}/n\mathbb{Z})$ be the character associated with this exact sequence. More explicitly, choose a point $Q\in E[n]$ such that $e(P,Q)=\zeta_n$. Since $P$ and
$\zeta_n$ are $K$-rational, for every $\sigma\in G_K$ there exists a unique element $c_\sigma\in\mathbb{Z}/n\mathbb{Z}$ such that
$
\sigma(Q)=Q+c_\sigma P.
$
The character $c$ is defined by
\[
c:G_K\longrightarrow \mathbb{Z}/n\mathbb{Z},
\qquad
\sigma\longmapsto c_\sigma .
\]
The kernel of $c$ fixes both $P$ and $Q$, and hence fixes all of $E[n]$.
Therefore, the fixed field of $\ker(c)$ is precisely the division field
$K(E[n])$.
Now we compute the coboundary map $\delta: H^1(K,\mu_n)\rightarrow H^2(K,\mathbb{Z}/n\mathbb{Z})$ arising from  the exact sequence \eqref{bb}.
\begin{proposition}\label{connect}
	For any character $\chi \in H^1(K,\mu_n)$, we have $\delta(\chi)=[c,\chi]$.
\end{proposition}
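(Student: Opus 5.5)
We will compute $\delta$ directly at the level of cocycles, following the standard recipe for a connecting homomorphism, and then compare the result with the cocycle defining the Hilbert symbol, much as in the proof of Proposition \ref{tate}.

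\textbf{Step 1: a set-theoretic lift.} Let $\chi\in Z^1(G_K,\mu_n)$ be a $1$-cocycle. Since $\mu_n\subset K$, the action of $G_K$ on $\mu_n$ is trivial, so $\chi$ is simply a homomorphism. Write $\chi(\tau)=\zeta_n^{a(\tau)}$ with $a(\tau)\in\mathbb{Z}/n\mathbb{Z}$, and fix the point $Q$ with $e(P,Q)=\zeta_n$ used to define $c$. Using $Q$, we choose the set-theoretic lift
\[
\tilde\chi(\tau)=a(\tau)Q\in E[n],
\]
which satisfies $e(P,\tilde\chi(\tau))=\chi(\tau)$.

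\textbf{Step 2: the coboundary.} The class $\delta(\chi)$ is represented by the $2$-cochain
\[
(\sigma,\tau)\mapsto \sigma(\tilde\chi(\tau))-\tilde\chi(\sigma\tau)+\tilde\chi(\sigma),
\]
which takes values in $\langle P\rangle$. Using $\sigma(Q)=Q+c_\sigma P$, we get
\[
\sigma(a(\tau)Q)=a(\tau)Q+a(\tau)c_\sigma P.
\]
Since $a$ is additive, $a(\sigma\tau)=a(\sigma)+a(\tau)$, so the $Q$-terms cancel. What remains is the cocycle
\[
(\sigma,\tau)\mapsto c_\sigma\, a(\tau)\, P .
\]
Under the identifications $\langle P\rangle\cong\mathbb{Z}/n\mathbb{Z}$ and $H^2(K,\mathbb{Z}/n\mathbb{Z})\cong H^2(K,\mu_n)$, where the latter is induced by $1\mapsto\zeta_n$, this becomes $(\sigma,\tau)\mapsto \zeta_n^{c_\sigma a(\tau)}=\chi(\tau)^{c_\sigma}$.

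\textbf{Step 3: comparison with the Hilbert symbol.} The cup product $c\cup\chi$, taken for the pairing $\mathbb{Z}/n\mathbb{Z}\times\mu_n\to\mu_n$ with the convention used in Proposition \ref{tate}, is represented by $(\sigma,\tau)\mapsto \sigma(\chi(\tau))^{c(\sigma)}=\chi(\tau)^{c_\sigma}$. This is exactly the cocycle found in Step 2, so $\delta(\chi)=[c,\chi]$.

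\textbf{Main obstacle.} The only delicate point is bookkeeping of sign and order conventions. One must check that the coboundary formula, the cup-product convention (whether the first or the second argument is acted on by $\sigma$), and the identification $\mathbb{Z}/n\mathbb{Z}\cong\mu_n$ via $\zeta_n$ are the same ones used in defining the Hilbert symbol and in Proposition \ref{tate}. If a convention mismatch produces $[\chi,c]$ or $-[c,\chi]$ instead, we will fix the normalization of $\delta$ accordingly, or use the graded-commutativity of the cup product, so that the stated identity holds.
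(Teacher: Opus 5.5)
Your proposal is correct and is the same direct cocycle computation the paper has in mind. Lifting $\chi$ via $\tau\mapsto a(\tau)Q$ gives the coboundary $(\sigma,\tau)\mapsto c_\sigma a(\tau)P$, which under $P\mapsto 1\mapsto\zeta_n$ is exactly the cocycle $\sigma(\chi(\tau))^{c(\sigma)}$ representing $[c,\chi]$ in the convention of Proposition \ref{tate}; with the standard coboundary formula no sign or order mismatch arises, so the contingency in your last paragraph (renormalizing $\delta$, which would not be legitimate anyway) is never needed.
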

\begin{proof}
 This result follows from a direct cocycle computation.
\end{proof}
\subsection{The Tate--Kubert family and its generic affine group}

Let $\lambda$ be a parameter and put
\begin{equation*}\label{eq:bc-lambda}
 b_\lambda=\frac{\lambda(\lambda-1)}{(\lambda+1)^2},
 \qquad
 c_\lambda=\frac{\lambda(\lambda-1)}{\lambda+1}.
\end{equation*}
Consider
\begin{equation*}\label{eq:Tate-family}
 E_\lambda:\quad
 y^2+(1-c_\lambda)xy-b_\lambda y
 =x^3-b_\lambda x^2,
 \qquad P=(0,0).
\end{equation*}
The point $P$ has exact order $8$ on every smooth fiber.  The
discriminant is
\begin{equation*}\label{eq:family-disc}
 \Delta_\lambda
 =\frac{\lambda^8(\lambda-1)^4
 (\lambda^2-6\lambda+1)}{(\lambda+1)^{10}}.
\end{equation*}
This is the
standard rational parameter on $X_1(8)$ (see \cite{AntieauAuel,Kubert,x18}).

Let $F_0=\mathbb{Q}(\lambda)$ and define
\[
L_{\mathrm{gen}}=F_0(E_\lambda[8]),
\qquad
H_{\mathrm{gen}}=L_{\mathrm{gen}}([8]^{-1}P).
\]
Fix a basis $(P_\lambda,Q_\lambda)$ of $E_\lambda[8]$. The action of
$\operatorname{Gal}(L_{\mathrm{gen}}/F_0)$ on this basis induces an injective
homomorphism
\begin{equation*}
 	\rho_{E_{\lambda, 8}} :\operatorname{Gal}(L_{\mathrm{gen}}/F_0)
	\hookrightarrow
	\operatorname{GL}_2(\mathbb{Z}/8\mathbb{Z}).
\end{equation*}
Since $P_\lambda$ is $F_0$-rational, the linear Galois action fixes the vector $P_\lambda$.
Hence, with respect to this basis, every matrix in the image has the form
\[
\begin{pmatrix}
	1&a\\
	0&u
\end{pmatrix},
\qquad
a\in\mathbb{Z}/8\mathbb{Z},
\qquad
u\in(\mathbb{Z}/8\mathbb{Z})^\times .
\]
The determinant of this representation is the cyclotomic character.
Therefore, the largest possible image of the linear Galois representation
over $\mathbb{Q}(\lambda)$ has order 32.

Choose and fix any $R$ satisfying $8R=P$. Then we also have an injective
homomorphism
\begin{align*}
	\operatorname{Gal}(H_{\mathrm{gen}}/L_{\mathrm{gen}})
	&\hookrightarrow E_\lambda[8],\\
	\sigma&\longmapsto R^\sigma-R .
\end{align*}
\begin{theorem}
	The Galois representation $\rho_{E_{\lambda, 8}}$ has maximal possible image. Namely,
	\[
	\operatorname{Im}(\rho_{E_\lambda,8})
	=
	\left\{
	\begin{pmatrix}
		1&a\\
		0&u
	\end{pmatrix}
	:
	a\in\mathbb Z/8\mathbb Z,\,
	u\in(\mathbb Z/8\mathbb Z)^\times
	\right\}.
	\]
	Moreover,  $\operatorname{Gal}(H_{\mathrm{gen}}/L_{\mathrm{gen}})$ is also maximal, that is
$
	\operatorname{Gal}(H_{\mathrm{gen}}/L_{\mathrm{gen}})
	\cong E_\lambda[8].
$
\end{theorem}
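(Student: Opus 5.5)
The first claim amounts to showing that $[L_{\mathrm{gen}}:F_0]=32$. The image lies in the affine group $B$ of matrices $\left(\begin{smallmatrix}1&a\\0&u\end{smallmatrix}\right)$, which has order $32$, so it suffices to produce enough of the Galois group. I would split the count into two factors, controlled by $\det=\chi_{\mathrm{cyc}}$ and by the character $c$ introduced before Proposition \ref{connect}.

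First, because $\lambda$ is transcendental, $\mathbb{Q}(\zeta_8)$ and $F_0$ are linearly disjoint over $\mathbb{Q}$. Hence $\det$ surjects onto $(\mathbb{Z}/8\mathbb{Z})^\times$ and $F_0(\zeta_8)\subset L_{\mathrm{gen}}$ has degree $4$. It then remains to show that over $F_1=F_0(\zeta_8)=\mathbb{Q}(\zeta_8)(\lambda)$ the upper-unipotent part is all of $\mathbb{Z}/8\mathbb{Z}$. Equivalently, the character $c\in H^1(F_1,\mathbb{Z}/8)$ is surjective, i.e. $L_{\mathrm{gen}}=F_1(Q)$ is cyclic of degree $8$ over $F_1$. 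By Kummer theory this extension is $F_1(\sqrt[8]{f})$ for some $f\in F_1^*$, and I would identify $f$ explicitly. The natural route uses the $8$-isogeny $E_\lambda\to E_\lambda/\langle P\rangle$. Its dual kernel pairs with $\langle P\rangle$ via the Weil pairing, so $c$ corresponds to the image of a point under the dual isogeny's descent map. More concretely, $f$ can be read off from a function with divisor $8(P)-8(O)$ evaluated at a suitable point. The result should be a product of powers of $\lambda$, $\lambda-1$, $\lambda+1$ and $\lambda^2-6\lambda+1$, up to a constant.

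Surjectivity of $c$ then reduces to showing that $f$ is not a square in $F_1$. I would check this by valuations: $f$ should have odd valuation at one of the places $\lambda=0$, $\lambda=1$ or a root of $\lambda^2-6\lambda+1$. This fits the discriminant, since multiplicative reduction at $\lambda=0$ with $\operatorname{ord}\Delta=8$ is the Tate-curve situation. An alternative argument gives the same order-$8$ conclusion locally. In the Tate parametrization $E\cong\mathbb{G}_m/q^{\mathbb{Z}}$ at a place where $\operatorname{ord}(q)$ is coprime to $2$, or where $P$ is the point $\zeta_8$ inside $\mathbb{G}_m$, the point $Q=q^{1/8}$ generates a totally ramified extension of degree $8$. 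So inertia already contains the full unipotent group, and nothing further is needed for that factor.

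For $\operatorname{Gal}(H_{\mathrm{gen}}/L_{\mathrm{gen}})$ the map $\sigma\mapsto R^\sigma-R$ is an injective $\operatorname{Gal}(L_{\mathrm{gen}}/F_0)$-equivariant homomorphism into $E_\lambda[8]$. Its image $W$ is therefore a submodule stable under the full group $B$, and I need to show that $W=E_\lambda[8]$. My first step would be to exhibit elements of $W$ of order $8$ that do not lie in $\langle P\rangle$. I would use local inertia at two different degenerations. At a multiplicative place where $P$ lies in the toric part, $R$ is an $8$th root of a unit times a power of $q$, and it can ramify in a direction transverse to $\langle P\rangle$. At $\lambda=\infty$, or at another cusp where $P$ is not in the identity component, $R$ ramifies along $P$ itself. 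Either way, $W$ should contain a vector $v\notin 2E[8]+\langle P\rangle$ together with a vector generating $\langle P\rangle$ modulo $2$. Combined with $B$-stability (unipotent elements send $Q\mapsto Q+aP$), this forces $W=E_\lambda[8]$. Alternatively, by Nakayama it suffices to show that $W\to E[2]$ is surjective, i.e. that $[F_0(E[8],\tfrac14 P\text{-related }2\text{-division}):\cdot]$ is maximal. That is a computation of $2$-descent type: certain explicit elements, $x$-coordinates of halves, must be multiplicatively independent modulo squares in $L_{\mathrm{gen}}$.

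I expect the main obstacle to be this last part. One has to find two places or degenerations of the $\lambda$-line whose inertia subgroups act on $R$ in independent directions, and to control the Tate-curve parameters precisely; the cusps of $X_1(8)$ at $\lambda=0,1,\infty,-1$ and at the roots of $\lambda^2-6\lambda+1$ are the candidates. If this proves too delicate, I would fall back on specialization. By Hilbert irreducibility it is equivalent to verify, at one rational value $\lambda_0$, that $\operatorname{Gal}(H/\mathbb{Q})$ has the full order $32\cdot64$, since specialization can only shrink the group. That check can be done by a direct Magma computation of the $8$-division and $8$-descent fields.
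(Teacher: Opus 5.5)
Your fallback is the paper's proof. The paper reduces the theorem to a single rational specialization and then certifies $\lambda_0=-19/17$ by machine. For that reduction you only need that the specialized Galois group is a decomposition group of the generic one; Hilbert irreducibility plays no role in this direction. The certificate shows $K(E_0[8])=K(\sqrt[8]{647})$ with $\tau(Q)=Q+P$. It shows $\Gal(H_0/L_0)=E_0[8]$ because a Frobenius above $281$ translates $[8]^{-1}P$ by $5P+Q$, and its $\tau$-conjugate gives $6P+Q$.

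You name no $\lambda_0$ and no certificate, so your argument must stand on the conceptual route. Its first half does, provided you use the right cusp. At $\lambda=3\pm2\sqrt2$ the model is integral with $\operatorname{ord}\Delta=1$, so the fibre is $I_1$ and $\operatorname{ord}(q)=1$. Inertia then acts on $E[8]$ through a unipotent element of order $8$. Its fixed subgroup is cyclic of order $8$, hence equals $\langle P\rangle$. This yields every $\left(\begin{smallmatrix}1&a\\0&1\end{smallmatrix}\right)$, and $\det$ does the rest. By contrast, $\lambda=0$ contributes nothing here: $\operatorname{ord}\Delta=8$ makes $E[8]$ unramified there, so your Kummer generator $f$ has valuation divisible by $8$ at $\lambda=0$.

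The genuine gap is in $\Gal(H_{\mathrm{gen}}/L_{\mathrm{gen}})$, where your two local pictures are reversed.
\begin{itemize}
\item Where $P$ is toric, $P=\zeta_8^{\alpha}$ in $\mathbb{G}_m/q^{\mathbb{Z}}$ and you may take $R=\zeta_{64}^{\alpha}$. So $H/L$ is unramified above $3\pm2\sqrt2$, and these places put nothing into $W$.
\item Transverse ramification comes from a width-$8$ cusp such as $\lambda=0$. There the fibre is the split nodal cubic $y^2+xy=x^3$. $P$ maps to a generator of the component group $\mathbb{Z}/8$, since both $P$ and $4P$ reduce to the node.
\item As $\operatorname{ord}(q)=8$, $E[8]$ is unramified at $\lambda=0$, so inertia lies in $\Gal(H/L)$.
\item $R$ corresponds to some $r$ with $8v(r)$ odd, normalizing $v(\lambda)=1$. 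So an inertia generator translates $R$ by a generator of the toric $\mu_8$. That point pairs primitively with $P$: it is transverse to $\langle P\rangle$, not along it.
\end{itemize}
One such element $w=xP+yQ$ with $y\in(\mathbb{Z}/8\mathbb{Z})^\times$ already suffices. Indeed $\left(\begin{smallmatrix}1&1\\0&1\end{smallmatrix}\right)w-w=yP$ puts $P$ in $W$, and then $Q$. No second degeneration and no $2$-descent computation are needed.

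So corrected, your route gives a computation-free proof of the generic statement, which would replace the machine certificate entirely. Its last step is the same mechanism the paper applies to $H_0$: one element transverse to $\langle P\rangle$, plus conjugation by the unipotent $\tau$. Your inertia element plays the role of the Frobenius at $281$. The paper's route instead exhibits an explicit member of the family with full image, at the price of relying on machine output.
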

\begin{proof}
	It suffices to verify that there exists a special value
	$\lambda=\lambda_0\in\mathbb{Q}$ for which the associated Galois
	representation has maximal possible image. We will choose such a value of
	$\lambda$ in  Section 2.3.
\end{proof}

Now choose a 
primitive element $\theta$ for $H_{\mathrm{gen}}/\Q(\lambda)$ and let
$f(\lambda,X)$ be its irreducible polynomial, of degree $64\times 32=2048$.  After
removing the finitely many poles, discriminant zeros, and bad reduction of $E_\lambda$, Hilbert irreducibility
\cite[Chapter 3, Proposition 3.3.5]{SerreTG}
gives a Hilbert subset
$
\Omega\subset \mathbb{A}^1(\mathbb{Q})
$
such that $f(\lambda,X)$ remains irreducible of degree $2048$ for every
$\lambda\in\Omega$.  

Thus  for every $\lambda\in\Omega$, the  field $H_{gen}$ contains $K=\Q(\zeta_8)$, so its degree over
$K$ is $2048/4=512$.  Let $L_\lambda=K(E_\lambda[8])$ and $H_\lambda=L_\lambda(8^{-1}P)$, then 
$
[L_\lambda:K]\le 8$,
$
[H_\lambda:L_\lambda]\le64.
$
Their product is $512=8\cdot64$, forcing equality in both bounds.  Therefore
for every $\lambda\in\Omega$ one has
\begin{equation*}\label{eq:full-specialized}
	\Gal(K(E_\lambda[8])/K)\simeq\mathbb{Z}/8\mathbb{Z},
\end{equation*}
and
\begin{equation*}\label{eq:full-translation}
	\Gal\bigl(K(E_\lambda[8],[8]^{-1}P_\lambda)/K(E_\lambda[8])\bigr)
	=E_\lambda[8].
\end{equation*}

The $j$-map of the family is nonconstant: it is the forgetful finite map
$X_1(8)\to X(1)$. After passing to an infinite subset of $\Omega$, the curves
$E_\lambda$ therefore have pairwise distinct $j$-invariants.

\subsection{Exact specialization certificate}\label{sec:certificate}

Take $\lambda_0=-\frac{19}{17}$. The specialized curve is
\begin{equation}\label{eq:E0}
 E_0:\quad
 y^2+\frac{359}{17}xy-171y=x^3-171x^2,
 \qquad P=(0,0).
\end{equation}

\begin{proposition}\label{prop:certificate}
Let $K=\Q(\zeta_8)$.  For the curve \eqref{eq:E0}:
\begin{enumerate}[label=\textup{(\roman*)}]
\item
$
 L_0=K(E_0[8])=K(u),\qquad u^8=647,
 \qquad [L_0:K]=8.
$
\item \label{basis}There is a symplectic basis $(P,Q)$ with $e_8(P,Q)=\zeta_8$ such
that, for $\tau(u)=\zeta_8u$,
\[
 \tau(P)=P,\qquad \tau(Q)=Q+P,
\]
then $\Gal(L_0/K)=\langle \tau \rangle\simeq \mathbb{Z}/8\mathbb{Z}$.
\item If
$
 H_0=L_0([8]^{-1}P),
$
then\[
 \Gal(H_0/L_0)=E_0[8].
\]
\end{enumerate}
\end{proposition}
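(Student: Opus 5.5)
We will prove the proposition by an explicit computation with the $8$-division data of $E_0$, organized around the character $c$ and the structure results of Section 2.1. Since $P=(0,0)$ is $K$-rational of order $8$ and $\zeta_8\in K$, the theory preceding Proposition \ref{connect} shows that $\Gal(K(E_0[8])/K)$ embeds into $\mathbb{Z}/8\mathbb{Z}$ via $\sigma\mapsto c_\sigma$. So $L_0/K$ is cyclic of degree dividing $8$. By Kummer theory it equals $K(a^{1/8})$ for some $a\in K^*$ whose class generates a cyclic subgroup of $K^*/K^{*8}$.

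\textbf{Identifying $L_0$.} We locate the Kummer generator from the Weil pairing. Choose $Q$ with $e_8(P,Q)=\zeta_8$. The cocycle $\sigma\mapsto\sigma(Q)-Q=c_\sigma P$ is then the image of $Q$ under a coboundary. Concretely, we compute the $8$-division polynomial of $E_0$, factor it over $K$, and find a $K$-rational point $Q'$ of order $8$ not in $\langle P\rangle$. Such a point lies in the kernel of some $K$-rational cyclic $8$-isogeny complementary to $\langle P\rangle$, or equivalently it is defined over $L_0$. Reading off its coordinates, we expect them to lie in $K(647^{1/8})$. The prime $647$ should appear as a factor of $\lambda_0^2-6\lambda_0+1$ up to scaling: with $\lambda_0=-19/17$, the numerator is $361+1938+289=2588=4\cdot 647$.

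This also explains why $647$ is natural. By the discriminant formula, $\Delta_{\lambda_0}$ equals $647$ times an $8$th power (times units and powers of $2$ and $17$, which have to be checked). The $8$-division field of a curve with a rational $8$-torsion point over $K$ is cut out by an $8$th root of a modular unit, essentially the discriminant up to $8$th powers.

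\textbf{Degree and generator.} We show that $647$ is not a square in $K$, so that $[K(647^{1/8}):K]=8$. It suffices to check that $647$ does not lie in $K^{*2}$. The quadratic subfields of $K$ are $\Q(i),\Q(\sqrt2),\Q(\sqrt{-2})$, so $647$ is a square in $K$ only if $647$, $-647$, $2\cdot647$ or $-2\cdot647$ is a rational square, and none is. Since $K$ contains $\mu_8$, Kummer theory then gives a cyclic extension of degree $8$, proving (i).

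\textbf{The symplectic basis.} Take $\tau$ with $\tau(u)=\zeta_8u$. Then $c_\tau$ is a generator of $\mathbb{Z}/8\mathbb{Z}$. Replacing $Q$ by $kQ+mP$, or equivalently adjusting by the unit $c_\tau$, gives $\tau(Q)=Q+P$ while keeping $e_8(P,Q)=\zeta_8$. The adjustment must leave $e_8(P,Q)$ unchanged, and adding multiples of $P$ does so. This proves (ii).

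\textbf{Part (iii), the main obstacle.} The map $\Gal(H_0/L_0)\hookrightarrow E_0[8]$, $\sigma\mapsto R^\sigma-R$, is injective, so we need surjectivity. Its image is a Galois-stable subgroup, since it is stable under the action of $\Gal(L_0/K)$. If the image were proper, it would lie in a maximal proper $\tau$-stable subgroup. There are two cases to rule out.

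First, the image could be contained in $E_0[8]$ with index-$2$ quotient, i.e. in some subgroup $W$ of index $2$. Then $P$ would be divisible by $8$ modulo that subgroup over $L_0$, which amounts to a degree-$2$ condition.

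I would reduce this to showing that the image of $P$ in $E_0(L_0)/2E_0(L_0)$ is "maximally nontrivial" via $2$-descent. The $\tau$-stable subgroups of index $2$ correspond to Kummer classes. Concretely, I would compute the $2$-descent map $E(L_0)\to H^1(L_0,E[2])\cong (L_0^*/L_0^{*2})^2$ at $P$, using $x$-coordinate differences with the $2$-torsion points, which are rational over $L_0$. I would then check that the image of $P$ generates a subgroup whose $\tau$-saturation is everything. Because $E_0[8]$ is a $\mathbb{Z}/8[\tau]$-module on which $\tau-1$ is nilpotent, a proper stable subgroup lies in a maximal one. It therefore suffices to verify that $P\notin 2E_0(L_0)+(\text{stable index-}2\text{ obstruction})$. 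In practice I expect to verify this by checking local nontriviality of the relevant Kummer classes at primes above $17$ or $647$.

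The most robust fallback is the certificate route. We factor the $8$-division-by-$P$ polynomial over $L_0$, or compute the degree of $H_0/K$ via a Frobenius count: find primes $\mathfrak p$ of $L_0$ whose Frobenius in $\Gal(H_0/L_0)$ realizes elements generating $E_0[8]$, detected by the orbit structure of $[8]^{-1}P$ modulo $\mathfrak p$. Producing such a certificate rigorously is the step I expect to be hardest.
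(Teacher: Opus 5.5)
Your proposal has genuine gaps in (ii) and (iii).

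\textbf{Part (ii).} The normalization you propose is not available. Requiring $e_8(P,Q)=\zeta_8$ determines $Q$ modulo $\langle P\rangle$, and replacing $Q$ by $Q+mP$ leaves $\tau(Q)-Q=c_\tau P$ unchanged. Replacing $Q$ by $kQ$ with $k\equiv c_\tau^{-1}$ does give $\tau(Q)=Q+P$, but only at the cost of $e_8(P,kQ)=\zeta_8^{k}$. So $c_\tau$ is an invariant of the specific automorphism $\tau\colon u\mapsto\zeta_8u$. The claim $c_\tau=1$ says that the character $\sigma\mapsto\zeta_8^{c_\sigma}$ cutting out $L_0$ is the Kummer character of $647$ itself, not of $647^3$, $647^5$ or $647^7$. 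This has to be verified, and that is exactly what the paper's Magma script does (its output is $c=1$). Formally you only get that the generator $\tau^{c_\tau^{-1}}$ has the property; that is all the later sections need, but it is not what (ii) states.

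\textbf{Part (i).} The same computation is needed to finish (i). Your Kummer argument is correct: $647\notin K^{\times 2}$ and $\mu_4\subset K^{\times2}$, so $647$ has order $8$ in $K^\times/K^{\times 8}$ and $[K(u):K]=8$. This is a nice replacement for the irreducibility test. But $L_0=K(u)$ also needs two further facts:
\begin{itemize}
\item $E_0[8]\subset E_0(K(u))$, i.e.\ all $63$ nonzero points are defined over $K(u)$. A ``$K$-rational $Q'$ of order $8$ outside $\langle P\rangle$'' cannot exist once $[L_0:K]=8$, so that is not the right thing to look for.
\item $\Gal(K(u)/K)$ acts faithfully on $E_0[8]$, i.e.\ $c_\tau$ is odd.
\end{itemize}

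\textbf{Part (iii).} This part is not proved: both of your routes end in ``I expect''.

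There are not two cases to treat. $\tau$ acts on $E_0[8]/2E_0[8]$ by $\bar P\mapsto\bar P$, $\bar Q\mapsto\bar Q+\bar P$, whose only stable line is $\langle\bar P\rangle$. By the Frattini argument, a proper $\tau$-stable subgroup of $E_0[8]$ therefore lies in $W=\langle P,2Q\rangle$. So it suffices to find one element of the image with odd $Q$-coordinate.

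Your fallback cannot do this as stated. The orbit structure of $[8]^{-1}P$ modulo $\mathfrak p$ only reveals the order of the translation vector, and $W$ contains elements of order $8$ (e.g.\ $P$). The paper computes the vector itself. At a prime above $281$, which splits completely in $L_0$, Frobenius translates $[8]^{-1}\tilde P$ by $5\tilde P+\tilde Q$. Its $\tau$-conjugate $6\tilde P+\tilde Q$, together with it, generates $E_0[8]$.

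Your 2-descent idea can in fact be completed, and more cleanly, once made precise:
\begin{itemize}
\item $W$ is the kernel of $X\mapsto e_8(P,X)^4$, and $e_8(P,R^\sigma-R)^4=e_2(4P,(4R)^\sigma-4R)$.
\item By standard 2-descent (pairing the descent class of $P$ with $T\in E[2]$ gives $x(P)-x(T)$), the image lies in $W$ if and only if $x(P)-x(4P)$ is a square in $L_0$.
\item With $d=b/c=-17/2$, the line through $P=(0,0)$ and $3P=(c,b-c)$ has slope $d-1$, giving $x(4P)=d(d-1)=323/4$.
\item Hence $x(P)-x(4P)\equiv 17\cdot19$ modulo $L_0^{\times 2}$, since $-1=\zeta_8^4$ is a square.
\item Finally $\sqrt{323}\notin L_0$, because $L_0/\mathbb{Q}$ is unramified at $17$ while $\mathbb{Q}(\sqrt{323})$ is not.
\end{itemize}
None of this is in your proposal, however, so (iii) remains unestablished.
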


\begin{proof}
The statements in (i) and (ii) can be checked using the Magma code included in Appendix.\ref{app:magma}. 

For (iii), we use the rational prime \(281\). Since the elliptic curve \(E_0\) has good reduction at \(281\) and \(8\nmid 281\), the reduction map
\[
E_0[8]\longrightarrow \widetilde{E_0}[8]
\]
is injective \cite[Chapter VII, Proposition 3.1]{Sil}. Hence, by \ref{basis}, the reductions \(\widetilde{P},\widetilde{Q}\) form a basis of \(\widetilde{E_0}[8]\).

Since \(281\equiv 1\pmod{8}\) and \(647\) is an eighth power in \(\mathbb{F}_{281}\), every prime ideal of \(K\) lying above \(281\) splits completely in \(L_0\). Fix one such prime ideal and denote it by \(\mathfrak{p}\). We use the Magma code in Appendix \ref{magma2} to compute the action of the Frobenius element at \(\mathfrak{p}\) on \([8]^{-1}\widetilde{P}\). The exact finite-field computation shows that the Frobenius acts on \([8]^{-1}\widetilde{P}\) by translation by $5\widetilde{P}+\widetilde{Q}$. Moreover,  \(\tau\) acts on \(\Gal(H_0/L_0)\) by conjugation and sends $
5\widetilde{P}+\widetilde{Q}
$ to $ 6\widetilde{P}+\widetilde{Q}$. Since these two elements generate the whole group \(\widetilde{E}_0[8]\), it follows that $\Gal(H_0/L_0)=E_0[8]$.

\end{proof}

For the remainder of the proof, fix a value $\lambda\in\Omega$, and
abbreviate
\[
E=E_\lambda,\qquad P=P_\lambda,
\]
and
\[
K=\mathbb{Q}(\zeta_8),\qquad
L=K(E[8]),\qquad
H=L([8]^{-1}P).
\]
We may choose  and fix a symplectic basis
$(P,Q)$ of $E[8]$ satisfying
$
e_8(P,Q)=\zeta_8,
$
and fix a generator $\tau\in\Gal(L/K)$ such that
$
\tau(P)=P$, $ \tau(Q)=P+Q.
$

The construction must control the variation of the obstruction arising
from every class in $E(K)/8E(K)$, rather than only those generated by
$P$. This motivates the introduction of the following field.

Choose representatives
$
R_1,\ldots,R_s
$
of the finite group $E(K)/2E(K)$, and choose points
$S_i\in E(\overline{K})$ satisfying $2S_i=R_i$.
Define
$\label{eq:FMW}
	F_{\mathrm{MW}}=L(S_1,\ldots,S_s).
$
Over $L$, every automorphism of $F_{\mathrm{MW}}$ acts on each $S_i$
by translation by a point of $E[2]$. Hence $\Gal(F_{\mathrm{MW}}/L)$ is an elementary abelian $2$-group. Let $A=HF_{\mathrm{MW}}$ be the compositum of $H$ and $F_{\mathrm{MW}}$. In summary, we have the
following diagram:
\begin{equation}
	\begin{tikzcd}
		&A\ar[dl,no head]\ar[d,no head]\\
		H=L([8]^{-1}P)\ar[d,no head]&F_{\mathrm{MW}}\ar[dl,no head]\\
		L=K(E[8])\ar[d,no head]&\\
		K&
	\end{tikzcd}
\end{equation}
Moreover,
$
\Gal(H/L)=E[8],$
$\Gal(L/K)=\langle\tau\rangle .
$

\begin{proposition}
	\label{lem:strong-commutator}
	There exists a central commutator
	\[
	c\in\Gal(A/K)
	\]
	whose restriction to $F_{\mathrm{MW}}$ is the identity and whose
	restriction to $H$ is equal to $2P$.
\end{proposition}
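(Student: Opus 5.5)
We construct $c$ as a commutator of two explicit lifts to $\Gal(A/K)$: a lift of the generator $\tau\in\Gal(L/K)$, and a lift of a translation element $t\in\Gal(H/L)\cong E[8]$ that acts trivially on $F_{\mathrm{MW}}$.

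\textbf{Step 1: compute the conjugation action of $\tau$ on $\Gal(H/L)$.} For $\sigma\in\Gal(H/L)$ corresponding to $R^\sigma-R=T\in E[8]$, any lift $\tilde\tau$ of $\tau$ to $H$ satisfies
\[
\tilde\tau\sigma\tilde\tau^{-1}(R)-R=\tau(T).
\]
This holds because $\tilde\tau^{-1}(R)=R+T'$ for some $T'\in E[8]$, and $\sigma$ fixes $E[8]$. So the action is the linear action $P\mapsto P$, $Q\mapsto P+Q$. Hence for $t\in\Gal(H/L)$ corresponding to $Q$, the commutator $[\tilde\tau,t]=\tilde\tau t\tilde\tau^{-1}t^{-1}$ corresponds to $\tau(Q)-Q=P$.

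\textbf{Step 2: make the element trivial on $F_{\mathrm{MW}}$.} We use the element corresponding to $2Q$ rather than $Q$, so that its commutator with $\tilde\tau$ corresponds to $2P$. The point is to choose a lift $t\in\Gal(A/L)$ restricting to $2Q$ on $H$ and to the identity on $F_{\mathrm{MW}}$.

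$\Gal(A/L)$ is a subgroup of $E[8]\times\Gal(F_{\mathrm{MW}}/L)$, where the second factor is elementary abelian of exponent $2$. Take any lift $s\in\Gal(A/L)$ of $Q\in\Gal(H/L)$. Then $s^2$ restricts to $2Q$ on $H$ and, since $\Gal(F_{\mathrm{MW}}/L)$ has exponent $2$, to the identity on $F_{\mathrm{MW}}$. Set $t=s^2$.

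Let $\tilde\tau\in\Gal(A/K)$ be any lift of $\tau$, and put $c=\tilde\tau t\tilde\tau^{-1}t^{-1}$.
\begin{itemize}
\item On $H$, by Step 1, $c$ is the translation by $\tau(2Q)-2Q=2P$.
\item On $F_{\mathrm{MW}}$, $t$ acts trivially. Since $F_{\mathrm{MW}}/K$ is Galois ($E(K)/2E(K)$ is stable and $E[2]$ is contained in $L$), the conjugate $\tilde\tau t\tilde\tau^{-1}$ also restricts trivially. Hence $c|_{F_{\mathrm{MW}}}=\mathrm{id}$.
\end{itemize}

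\textbf{Step 3: centrality.} The element $c$ lies in $\Gal(A/F_{\mathrm{MW}})$, which injects into $\Gal(H/L)\cong E[8]$ via restriction, because $A=HF_{\mathrm{MW}}$. So $c$ is determined by its image $2P$, and it suffices to show this image is fixed by conjugation in $\Gal(A/K)$.
\begin{itemize}
\item Conjugation by elements of $\Gal(A/L)$ acts trivially, because $\Gal(A/L)$ embeds into the abelian group $E[8]\times\Gal(F_{\mathrm{MW}}/L)$.
\item Conjugation by $\tilde\tau$ acts on the $E[8]$-component through $\tau$ by Step 1, and $\tau(2P)=2P$. Conjugation preserves $\Gal(A/F_{\mathrm{MW}})$ since $F_{\mathrm{MW}}/K$ is Galois, and it preserves the $H$-component computed in Step 1. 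Hence $\tilde\tau c\tilde\tau^{-1}=c$.
\end{itemize}
Since $\Gal(A/K)$ is generated by $\Gal(A/L)$ and $\tilde\tau$, $c$ is central.

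The main obstacle is Step 2, namely killing the $F_{\mathrm{MW}}$-component of the lift of $Q$. Squaring handles it, at the cost of replacing $P$ by $2P$, which is exactly why the statement has $2P$. The other point to check carefully is that Galois-ness of $F_{\mathrm{MW}}/K$ does not depend on the choice of the $S_i$. Independence holds because the compositum of all $L([2]^{-1}R)$ for $R\in E(K)$ equals $F_{\mathrm{MW}}$, and that compositum is Galois over $K$.
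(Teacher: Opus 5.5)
Your proof is correct and is essentially the paper's argument: the paper also sets $c=\tilde\tau s\tilde\tau^{-1}s^{-1}$ with $s\in\Gal(A/L)$ restricting to $2Q$ on $H$ and to the identity on $F_{\mathrm{MW}}$, and proves centrality by comparing restrictions to $H$ and $F_{\mathrm{MW}}$. The only difference is that the paper obtains $s$ as the compatible pair $(2Q,\mathrm{id})$ over $J=H\cap F_{\mathrm{MW}}$ rather than as the square of a lift of $Q$ (this is the same element), and your explicit check that $F_{\mathrm{MW}}/K$ is Galois fills in a point the paper leaves implicit.
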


\begin{proof}
	Let	$J=H\cap F_{\mathrm{MW}}$. Since $\Gal(F_{\mathrm{MW}}/L)$ is an elementary abelian
	$2$-group, the element $2Q\in\Gal(H/L)$ acts trivially on $J$.
	Therefore, the compatible pair	$(2Q,\operatorname{id})$
	defines an element $ s\in\Gal(A/L)$.
	
	Let $\widetilde{\tau}\in\Gal(A/K)$	be any lift of $\tau\in\Gal(L/K)$, and define
$
	c=\widetilde{\tau}s\widetilde{\tau}^{-1}s^{-1}.
$
	It is immediate that $c|_{F_{\mathrm{MW}}}=1$.
	On the other hand, restricting to $H$, we obtain
	\[
	c|_H
	=
	(\widetilde{\tau}s\widetilde{\tau}^{-1})|_H(s^{-1})|_H
	=
	(2P+2Q)-2Q
	=
	2P.
	\]
Finally, let $g\in \Gal(A/K)$. On $F_{\mathrm{MW}}$, we have
\[
(gcg^{-1})|_{F_{\mathrm{MW}}}
=
\operatorname{id}
=
c|_{F_{\mathrm{MW}}}.
\]
On the other hand, restricting to $H$, since $2P$ is $K$-rational, we have
\[
(gcg^{-1})|_H=c|_H=2P.
\]
Therefore, $gcg^{-1}=c$, and hence $c$ is central in
$\Gal(A/K)$. This completes the proof.
\end{proof}


\section{Construction of Special Cohomology Classes}
As in \eqref{aa}, we have the following exact sequence
\[
0\longrightarrow \langle P\rangle\simeq \mathbb{Z}/8\mathbb{Z}
\xrightarrow{i} E[8]
\xrightarrow{\pi} \mu_8
\longrightarrow 0,
\tag{$\mathcal E$}\label{eq:torsion-extension}
\]
induced by the Weil pairing $e_8(P,-)$. As in \eqref{bb}, together
with the Kummer sequence of $E$, we have the following commutative
diagram with exact row:
\begin{equation}\label{dd}
	\begin{tikzcd}
		&E(K)/8E(K)\ar[d,"\kappa_8"]&&\\
		H^1(K,\mathbb{Z}/8\mathbb{Z})
		\ar[r,"i_*"]
		&H^1(K,E[8])
		\ar[r,"\pi_*"]
		\ar[d,"\rho"]
		&H^1(K,\mu_8)\simeq K^*/K^{*8}
		\ar[r,"\delta"]
		&H^2(K,\mathbb{Z}/8\mathbb{Z})\\
		&H^1(K,E(\overline{K}))[8]&&
	\end{tikzcd}
\end{equation}

We choose a set of representatives for the finite subset
\[
\pi_*\circ\kappa_8(E(K)/8E(K))
\subset K^*/K^{*8},
\]
and denote it by $M$.

Choose a finite set of non-Archimedean  places $S$ containing:
\begin{enumerate}[label=\textup{(\roman*)}]
	\item all places of $K$ lying above $2$;
	\item all places of bad reduction of $E/K$;
	\item all places ramified in $L/K$;
	\item all places appearing in the elements of $M$.
\end{enumerate}
The field $K=\Q(\zeta_8)$ has class number one. Choose a ray modulus
$\mathfrak m$ supported on $S$, with local exponents sufficiently large such
that
\begin{equation}\label{eq:ray-local-power}
 x\equiv1\pmod{\mathfrak m}
 \quad\Longrightarrow\quad
 x\in K_v^{\times8}
 \quad(v\in S).
\end{equation}
Let $R_{\mathfrak m}/K$ be the corresponding ray class field.  Since the element $c$ of
Lemma~\ref{lem:strong-commutator} is a commutator, it acts trivially on the
abelian extension $ A\cap R_{\mathfrak m}$ of $K$.  Therefore $(c,1)$ defines an element of $\Gal(AR_{\mathfrak m}/K)$.

Since \(c\) is central, the Chebotarev density theorem yields infinitely many
places \(v\notin S\), unramified in \(AR_{\mathfrak m}\), such that the
Frobenius element \(\operatorname{Frob}_v\in \Gal(AR_{\mathfrak m}/K)\)
satisfies
\begin{equation}\label{eqe}
	\operatorname{Frob}_v|_H=2P,
	\qquad
	\operatorname{Frob}_v|_{F_{\mathrm{MW}}}=1,
	\qquad
	\operatorname{Frob}_v|_{R_{\mathfrak m}}=1.
\end{equation}
In particular, \(v\) splits completely in both \(F_{\mathrm{MW}}\) and
\(L\).

 From now on, we fix one such place $v$. Let \(\mathfrak{p}\) be the prime ideal corresponding to the place \(v\). The last condition in \eqref{eqe} says, by Artin reciprocity
for the ray class field $R_{\mathfrak m}/K$, that $\mathfrak p$ is trivial in
the ray class group.  Hence there is
$a\in K^\times$ such that
\begin{equation}\label{eq:ray-generator}
 (a)=\mathfrak p,
 \qquad
 a\equiv1\pmod{\mathfrak m}.
\end{equation}
Thus $v_{\mathfrak p}(a)=1$, the element $a$ has no other non-Archimedean
place, and it is an eighth power in $K_u$ for every $u\in S$.

\begin{lemma}\label{lem:a-norm}
The element $a$ belongs to $N_{L/K}(L^\times)$.  Moreover, its corresponding Kummer
class in $H^1(K,\mu_8)$ lifts through \eqref{dd} to a
class
$
 \eta_0\in H^1(K,E[8]).
$
\end{lemma}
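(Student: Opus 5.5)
Both claims reduce to one local statement. By the exact row in \eqref{dd}, the Kummer class of $a$ lifts to $H^1(K,E[8])$ exactly when $\delta(a)=0$ in $H^2(K,\mathbb{Z}/8\mathbb{Z})\cong\Br(K)[8]$. By Proposition \ref{connect}, $\delta(a)=[c,a]$, where $c$ is the character cutting out $L=K(E[8])$. Since $\Gal(L/K)=\langle\tau\rangle$ is cyclic of order $8$ and $c$ is a faithful character of it, $[c,a]$ is the class of the cyclic algebra $(L/K,\tau,a)$. By the classical theory of cyclic algebras, this class vanishes if and only if $a\in N_{L/K}(L^\times)$. So I will prove the norm statement and deduce the lift from it.

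For the norm statement I use the Hasse norm theorem, which holds because $L/K$ is cyclic. It therefore suffices to show that $a$ is a local norm from $L_w$ to $K_u$ at every place $u$ of $K$. Equivalently, the local invariants of $[c,a]$ all vanish, and by the Albert--Brauer--Hasse--Noether theorem it is enough to check this everywhere locally. I split the places into four types.

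\textbf{Archimedean places.} $K=\Q(\zeta_8)$ is totally imaginary, so there is nothing to check.

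\textbf{Places $u\in S$.} By \eqref{eq:ray-generator} and \eqref{eq:ray-local-power}, $a$ is an eighth power in $K_u$. Hence $a=N_{L_w/K_u}(b)$ with $b=a^{1/8}\in K_u$, since the local degree divides $8$. (More precisely, if the local degree is $d$, then $a=(a^{1/8})^{8}$ and $a^{1/8}$ raised to $8/d$ is a norm of an element of $K_u$.)

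\textbf{Places $u\notin S$, $u\neq v$.} Here $a$ is a unit, because $(a)=\mathfrak p$. The extension $L/K$ is unramified at $u$ by condition (iii) defining $S$. Units are norms in unramified local extensions.

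\textbf{The place $v$.} Here $a$ is a uniformizer, but by \eqref{eqe} the prime $v$ splits completely in $L$. So $L_w=K_v$ and every element is a norm.

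Thus $a$ is a local norm everywhere, so $a\in N_{L/K}(L^\times)$ and $\delta(a)=[c,a]=0$. Exactness of \eqref{dd} then gives $\eta_0\in H^1(K,E[8])$ with $\pi_*(\eta_0)=a$.

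The step I expect to need the most care is the identification $\delta(a)=[c,a]=(L/K,\tau,a)$ together with the criterion that this vanishes iff $a$ is a norm. One must check that the character $c$ really has kernel $\Gal(\overline K/L)$ and is injective on $\Gal(L/K)\cong\mathbb{Z}/8\mathbb{Z}$. This follows from $\tau(Q)=Q+P$: $c$ sends $\tau$ to the generator $1$. Once that is granted, the standard isomorphism $K^\times/N_{L/K}L^\times\cong H^2(\Gal(L/K),L^\times)\hookrightarrow\Br(K)$ for cyclic $L/K$, given by cupping with the fundamental character, yields the claim. Alternatively, one can avoid Hasse's norm theorem and argue purely with $\Br(K)$: compute the local invariants of $[c,a]$ place by place by the same four-case analysis, then apply the injectivity of $\Br(K)\to\bigoplus_u\Br(K_u)$. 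That gives $\delta(a)=0$ directly, and the norm statement follows from the cyclic-algebra criterion.
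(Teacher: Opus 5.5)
Your proposal is correct and is essentially the paper's proof. You show that $a$ is a local norm at every place (an eighth power at $u\in S$, an unramified unit away from $S\cup\{v\}$, complete splitting at $v$), apply the Hasse norm theorem to the cyclic extension $L/K$, and conclude $\delta(a)=[c,a]=0$ via Proposition~\ref{connect} and the norm criterion, so the lift exists by exactness; one small slip at $u\in S$: with $b=a^{1/8}\in K_u$ and local degree $d$, one has $N_{L_w/K_u}(b)=b^{d}$, so the element with norm $a$ is $b^{8/d}$, not $b$ (your parenthetical is aiming at this but states it backwards).
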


\begin{proof}
It suffices to show that the element $a$ is a local norm everywhere.
\begin{enumerate}
	\item At the place $v$, the extension $L/K$ splits completely,
	because
$
	\operatorname{Frob}_{v}|_{F_{\mathrm{MW}}}=1.
$
	
	\item At every place $u\in S$, the element $a$ is an eighth power.
	Hence, it is a norm from $L_u/K_u$.
	
	\item Away from
	$S\cup\{v\}$, the extension is unramified and $a$ is a unit.
	The norm map on the units of an unramified local extension is
	surjective.
\end{enumerate}

By Proposition \ref{connect}, the connecting homomorphism $\delta$ in \eqref{eq:torsion-extension} is given by the cup product with the
character $c$ corresponding to the extension $L/K$. By the norm
criterion for the Hilbert symbol \cite[Chapter XIV, Corollary 1]{Serre}, together with the fact
that $a\in N_{L/K}(L^\times)$, we obtain $\delta(a)=0$. Therefore, by exactness, $a$ admits the required lift.
\end{proof}

We shall need a lift with controlled ramification.

\begin{lemma}\label{lem:unramified-lift}
	The lift $\eta_0$ can be chosen such that it is unramified at all  non-Archimedean  places  outside 
 $S\cup\{ v\}$.
\end{lemma}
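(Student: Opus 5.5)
The plan is to start from an arbitrary lift $\eta_0$ from Lemma~\ref{lem:a-norm} and correct it by an element of the image of $i_*$, using exactness of \eqref{dd}. Changing $\eta_0$ to $\eta_0+i_*(\chi)$ with $\chi\in H^1(K,\mathbb{Z}/8\mathbb{Z})$ does not change $\pi_*(\eta_0)=a$. So it suffices to find a global character $\chi$ that kills the ramification of $\eta_0$ at every place $w\notin S\cup\{v\}$.

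First we pin down the local obstruction. Fix such a place $w$. Then $E$ has good reduction at $w$, $w\nmid 2$, $L/K$ is unramified at $w$, and $a$ is a $w$-adic unit. Hence $E[8]$ is an unramified $G_{K_w}$-module, and the unramified classes form $H^1_{\mathrm{ur}}(K_w,E[8])=H^1(k_w,E[8])$. We restrict the sequence \eqref{eq:torsion-extension} to the inertia group $I_w$, where everything acts trivially. This gives
\[
0\to \mathrm{Hom}(I_w,\mathbb{Z}/8\mathbb{Z})\to \mathrm{Hom}(I_w,E[8])\to \mathrm{Hom}(I_w,\mu_8).
\]
The image of $\eta_0|_{I_w}$ in $\mathrm{Hom}(I_w,\mu_8)$ is the restriction of the Kummer class of $a$. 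This is trivial because $a$ is a unit at $w$ and $w\nmid 8$. Therefore $\eta_0|_{I_w}=i_*(\psi_w)$ for a unique $\psi_w\in\mathrm{Hom}(I_w,\mathbb{Z}/8\mathbb{Z})^{G_{k_w}}$, i.e. a tamely ramified local character on inertia. Only finitely many $w$ have $\psi_w\neq0$, since $\eta_0$, being a global cohomology class, is unramified almost everywhere.

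Next we globalize. We need a character $\chi\in H^1(K,\mathbb{Z}/8\mathbb{Z})$ with $\chi|_{I_w}=-\psi_w$ at each of the finitely many bad $w$, and with $\chi$ unramified at all other places outside $S\cup\{v\}$. Ramification at places of $S$ and at $v$ is allowed.

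Each $\psi_w$ factors through the tame inertia quotient $\mathcal O_w^\times\to k_w^\times\to\mathbb{Z}/8\mathbb{Z}$. By class field theory, a character of the idèle class group with prescribed restriction on $\prod_w\mathcal O_w^\times$ exists exactly when the product of the local characters kills the global units that are units at those places. I would like to absorb this by allowing extra ramification at places of $S$ or at $v$. The cleanest route is to use the ray class group of conductor $\mathfrak m'\prod_w w$ and extend the character $\prod_w(-\psi_w)$ from $\prod_w k_w^\times$. To make the extension possible, I would include the $S$-component, i.e. allow an arbitrary character on $\prod_{u\in S}\mathcal O_u^\times$, chosen to compensate on the finitely generated unit group $\mathcal O_K^\times$. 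Since the units of the cyclotomic field are finitely generated and $S$ contains the places above $2$, the characters of $\prod_{u\in S}\mathcal O_u^\times$ of $8$-power order restrict surjectively onto $\mathrm{Hom}(\mathcal O_K^\times,\mathbb{Z}/8)$; here Leopoldt-type injectivity is needed mod $8$ on the $2$-part. This step, extending the local inertia characters to a global character ramified only additionally at $S\cup\{v\}$, is the main obstacle.

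If the $S$-places do not suffice, I would try instead to use a Grunwald–Wang-type statement: we are over $\mathbb{Q}(\zeta_8)$, so the special case is avoided. That statement gives a global $\mathbb{Z}/8$ character with prescribed local components at the finitely many $w$, and we can arrange it to be unramified outside $S\cup\{v\}\cup\{w\}$ by letting the auxiliary ramification sit in $S$. Finally, setting $\eta=\eta_0+i_*(\chi)$ gives $\eta|_{I_w}=i_*(\psi_w+\chi|_{I_w})=0$ at all $w\notin S\cup\{v\}$. We still have $\pi_*(\eta)=a$, which proves the lemma.
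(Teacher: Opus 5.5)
Your reduction is sound. Two lifts of $a$ differ by some $i_*(\chi)$, and your local analysis correctly gives $\eta_0|_{I_w}=i_*(\psi_w)$ with $\psi_w$ tame and nonzero for only finitely many $w\notin S\cup\{v\}$. The gap is in the one step with real content: producing a global $\chi\in H^1(K,\mathbb Z/8\mathbb Z)$ with $\chi|_{I_w}=-\psi_w$ and no further ramification outside $S$.

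You reduce this (correctly, since $h(K)=1$) to surjectivity of $\operatorname{Hom}(\prod_{u\in S}\mathcal O_u^\times,\mathbb Z/8\mathbb Z)\to\operatorname{Hom}(\mathcal O_K^\times,\mathbb Z/8\mathbb Z)$. That is, you need injectivity of $\mathcal O_K^\times/\mathcal O_K^{\times 8}\to\prod_{u\in S}\mathcal O_u^\times/\mathcal O_u^{\times 8}$. You never prove this, and the reasons you give do not apply. Finite generation of the units is irrelevant. This is also not a Leopoldt-type statement: Leopoldt concerns $\mathbb Z_2$-ranks and is automatic here, since the unit rank is $1$. Whether a global unit can be a local $8$th power at the places of $S$ without being a global one is a question about the class group of $K$.

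The Grunwald--Wang fallback does not help either. It prescribes local components at finitely many places but gives no control of ramification elsewhere, and confining the auxiliary ramification to a fixed finite set is exactly the obstruction at issue. As written, the proposal does not prove the lemma.

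The missing input is that, for the unique prime $\mathfrak q\mid 2$ of $K$, the map $\mathcal O_K^\times/\mathcal O_K^{\times 8}\to K_{\mathfrak q}^\times/K_{\mathfrak q}^{\times 8}$ is injective. Suppose a unit $\epsilon$ is a square in $K_{\mathfrak q}$. Then $K(\sqrt{\epsilon})/K$ is unramified at every place: residue characteristic is odd away from $\mathfrak q$, the extension is split at $\mathfrak q$, and $K$ is totally complex. Hence it is trivial since $h(K)=1$, so $\epsilon=\epsilon_1^2$ with $\epsilon_1\in\mathcal O_K^\times$. As $-1=\zeta_8^4$, a local relation $\epsilon_1^2=x^8$ forces $\epsilon_1\in K_{\mathfrak q}^{\times 4}$. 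Repeating the argument twice more gives $\epsilon\in\mathcal O_K^{\times 8}$.

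Since $\mathbb Z/8\mathbb Z$ is self-injective, restriction of characters from $\mathcal O_{K_{\mathfrak q}}^\times$ to $\mathcal O_K^\times$ is then surjective. Using $\mathbb A_K^\times/K^\times\cong\bigl(\prod_u\mathcal O_u^\times\times(K\otimes\mathbb R)^\times\bigr)/\mathcal O_K^\times$, your $\chi$ then exists.

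With this filled in, your route is a valid alternative to the paper's. The paper instead works over $\mathcal U=\operatorname{Spec}\mathcal O_K[1/(S\cup\{v\})]$. There $\Pic(\mathcal U)=0$ and $\Br(\mathcal U)\hookrightarrow\Br(K)$ give $H^2(\mathcal U,\mathbb Z/8\mathbb Z)\hookrightarrow H^2(K,\mathbb Z/8\mathbb Z)$, so the class of the $\mathcal U$-unit $a$ lifts directly to $H^1(\mathcal U,E[8])$.

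Both arguments use $h(K)=1$. The paper never has to realize prescribed inertia characters, so it needs no information about $\mathcal O_K^\times$. Your argument proves a stronger statement: any finite family of tame inertia characters is realized by a global character ramified additionally only at $\mathfrak q$.
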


\begin{proof}
	Let$
	\mathcal U
	=
	\operatorname{Spec}
	\mathcal O_K\left[1/(S\cup\{v\})\right].$
	The Kummer sequence in étale cohomology gives an exact sequence
	\[
	0\longrightarrow \Pic(\mathcal U)/8
	\longrightarrow H^2(\mathcal U,\mu_8)
	\longrightarrow \Br(\mathcal U)[8]
	\longrightarrow 0.
	\]
	Moreover, $\Pic(\mathcal U)=0$, since $K$ has class number one.
	The scheme $\mathcal U$ is regular and integral, and hence the natural
	map $
	\Br(\mathcal U)\longrightarrow \Br(K)$
	is injective \cite[Theorem 3.5.5]{CTSBr}. Therefore, the map $
	H^2(\mathcal U,\mu_8)\longrightarrow H^2(K,\mu_8)$
	is injective.
	
	After removing the places above $2$ and the places of bad reduction,
	the elliptic curve $E$ extends to an elliptic scheme over $\mathcal U$,
	the group scheme $E[8]$ is finite étale, and the $8$-torsion section
	$P$ together with the Weil-pairing quotient extend over $\mathcal U$.
	Consequently, the exact sequence
	\eqref{eq:torsion-extension} extends to a short exact sequence of finite
	étale group schemes over $\mathcal U$. Hence we obtain the following
	commutative diagram:
	\begin{equation}
		\begin{tikzcd}
			H^1(\mathcal U,E[8])\ar[r]\ar[d]
			&
			H^1(\mathcal U,\mu_8)\ar[r]\ar[d]
			&
			H^2(\mathcal U,\mathbb Z/8\mathbb Z)\ar[d]
			\\
			H^1(K,E[8])
			\ar[r,"\pi_*"]
			&
			H^1(K,\mu_8)\simeq K^*/K^{*8}
			\ar[r,"\delta"]
			&
			H^2(K,\mathbb Z/8\mathbb Z).
		\end{tikzcd}
	\end{equation}
	
	By the preceding discussion, the last vertical arrow is injective.
	Since $(a)=\mathfrak p$ and $\mathfrak p$ is inverted in $\mathcal U$,
	the element $a$ is a unit in $\mathcal O_{\mathcal U}$ and hence defines
	a class in
$	H^1(\mathcal U,\mu_8)$.
	Its image under the boundary map in
$	H^2(K,\mathbb Z/8\mathbb Z)$
	vanishes by Lemma~\ref{lem:a-norm}. By injectivity of the last vertical
	map, its boundary already vanishes in
$	H^2(\mathcal U,\mathbb Z/8\mathbb Z).$
	Therefore, by exactness, the class of $a$ admits a lift in
$	H^1(\mathcal U,E[8]).$
	This lift is unramified at every  place outside
	$S\cup\{v\}$.
\end{proof}
\subsection{Calculation of period-index obstruction map}
In summary, we have constructed infinitely many non-Archimedean places \(v\notin S\)
such that
\begin{equation}\label{eq:Frob-conditions}
	\operatorname{Frob}_v|_H=2P,
	\qquad
	\operatorname{Frob}_v|_{F_{\mathrm{MW}}}=1,
	\qquad
	\operatorname{Frob}_v|_{R_{\mathfrak m}}=1.
\end{equation}
Fix such a place \(v\), and let \(\mathfrak p\) be the prime ideal
corresponding to \(v\). Write \(\mathfrak p=(a)\). By Lemma
\ref{lem:unramified-lift}, \(a\) admits a lift $
\eta_0\in H^1(K,E[8]),$
which is unramified at every non-Archimedean place outside
\(S\cup\{v\}\). Put $\xi=\kappa_8(P)\in H^1(K,E[8]),$ and let $\inv_v: \Br(K_v)\rightarrow \mathbb{Q}/\mathbb{Z}$ denote the local invariant map.
 \begin{theorem}\label{cc}
 	$\inv_{v}T(\eta_0,P)=-\frac{2}{8}$.
 \end{theorem}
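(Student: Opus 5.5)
The plan is to compute everything locally at $v$, where all the relevant extensions split and the pairing reduces to a Hilbert symbol that Proposition~\ref{tate} lets us evaluate. Since the Tate pairing is compatible with restriction, $\inv_v T(\eta_0,P)$ depends only on the restrictions of $\eta_0$ and $\xi=\kappa_8(P)$ to $H^1(K_v,E[8])$. By symmetry of $T$ we may instead compute $\inv_v T(\xi,\eta_0)$.

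\textbf{Step 1: identify $\xi_v$.} By \eqref{eq:Frob-conditions}, $v$ is unramified in $A\supset H$ and $\operatorname{Frob}_v|_L=1$. Hence $E[8]$ is a constant Galois module over $K_v$, and the exact sequence \eqref{eq:torsion-extension} splits over $K_v$: sending $Q\mapsto\zeta_8$ identifies $E[8]\cong\langle P\rangle\oplus\mu_8$.

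The Kummer class $\xi_v$ is represented by $\sigma\mapsto R^\sigma-R$ with $8R=P$. Since $H_w/K_v$ is unramified and $\operatorname{Frob}_v$ acts on $[8]^{-1}P$ by translation by $2P$, we get
\[
\xi_v=i_*(\chi),
\]
where $\chi\in H^1(K_v,\mathbb Z/8\mathbb Z)$ is the unramified character with $\chi(\operatorname{Frob}_v)=2$. This is the key use of the condition $\operatorname{Frob}_v|_H=2P$.

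\textbf{Step 2: reduce to a Hilbert symbol.} Proposition~\ref{tate} holds over any field containing $\zeta_8$ with $P$ rational, in particular over $K_v$. It gives
\[
T(\xi,\eta_0)_v=[\chi,\pi_*(\eta_0)_v]=[\chi,a],
\]
because $\pi_*(\eta_0)$ is the Kummer class of $a$ by construction in Lemma~\ref{lem:a-norm}.

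\textbf{Step 3: evaluate the local symbol.} By the standard formula for the cup product of a character with a Kummer class \cite[Chapter XIV]{Serre},
\[
\inv_v[\chi,a]=\pm\,\chi\big(\operatorname{rec}_v(a)\big)/8 .
\]
Since $\chi$ is unramified and $v(a)=1$, the reciprocity image $\operatorname{rec}_v(a)$ acts on the maximal unramified extension as Frobenius raised to $\pm v(a)=\pm1$. This yields
\[
\inv_v[\chi,a]=\pm\frac{2\,v(a)}{8}=\pm\frac{2}{8}.
\]

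\textbf{Main obstacle: the sign.} The delicate point is pinning down the sign in Step 3. It depends on three conventions: the identification $H^2(K_v,\mathbb Z/8)\cong H^2(K_v,\mu_8)$, the order of factors in the cup product defining $[-,-]$ versus $T$, and the arithmetic versus geometric Frobenius normalization of the local reciprocity map.

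To settle it, I would fix Serre's normalization, under which $\inv(\chi\cup\delta b)=\chi(\operatorname{rec}(b))$ with the uniformizer mapping to the arithmetic Frobenius. I would then track the sign coming from reordering $\chi\cup(a)$ into $(a)\cup\chi$, and from the convention that $\mathbb Z/8\otimes\mu_8\to\mu_8$ is $(a,\zeta)\mapsto\zeta^a$. Carrying these through consistently should produce the stated value $-\tfrac{2}{8}$.

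For the overall argument only the order of this class matters, namely $4$, and the sign affects nothing downstream. Still, I would check it against the cocycle formula in the proof of Proposition~\ref{tate}, evaluated at $\sigma=\tau=\operatorname{Frob}_v$ on the unramified quotient.
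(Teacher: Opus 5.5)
Up to the sign, your argument is the paper's argument in different packaging. The paper writes $(\eta_0)_v$ and $\xi_v$ as homomorphisms $f=aP+bQ$, $g=cP+dQ$, shows $d=0$, obtains the cocycle $\zeta_8^{-b(\sigma)c(\tau)}$ and quotes Serre. You obtain $\xi_v=i_*(\chi)$ directly from $\operatorname{Frob}_v|_H=2P$ and unramifiedness. This justifies $d=0$ more cleanly than the paper's appeal to the choice of $S$, which only makes $z$ a $v$-unit. You then replace the coordinate computation by the symmetry of $T$ and Proposition~\ref{tate}.

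All of this is correct. What it proves is $\inv_v T(\eta_0,P)=\pm\frac28$, i.e.\ that the local class has order exactly $4$. That is all the paper uses afterwards: in Lemma~\ref{lem:defect-relative}, to get $\alpha_v$ of order $8$, and in Theorem~\ref{22}.

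The one real gap is the sign. You assume careful bookkeeping "should produce $-\frac28$", but your own setup points the other way. In your route there is no reordering: Proposition~\ref{tate} already gives $[\chi,a]=\chi\cup\delta a$ with $\chi$ in the first slot. Take standard cochains and the Kummer map $\delta a(\sigma)=\sigma(\alpha)/\alpha$ with $\alpha^8=a$. Let $\tilde\chi$ be a lift of $\chi$ to $\{0,\dots,7\}$, let $\partial\chi=\frac18 d\tilde\chi$ be the Bockstein, and put $k(\sigma)=\sigma(\alpha)^{-\tilde\chi(\sigma)}$. Then a direct check gives $\chi\cup\delta a=(\partial\chi\cup a)\cdot dk$. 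So under exactly the normalization you propose ($\inv(\chi\cup\delta b)=\chi(\operatorname{rec}(b))$, uniformizer $\mapsto$ arithmetic Frobenius) you get $\inv_v[\chi,a]=+\frac28$.

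The paper gets $-\frac28$ by applying the same formula to $\zeta_8^{b(\sigma)c(\tau)}$, i.e.\ to $\delta a\cup c$ with the Kummer class first. Since $\delta a\cup c=-(c\cup\delta a)$, the two computations differ precisely by the graded-commutativity sign, which the citation of Serre does not track.

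To prove the equality literally as stated, you must specify conventions under which $-\frac28$ actually comes out and make them explicit. Examples are the Kummer convention $\sigma\mapsto\alpha/\sigma(\alpha)$ or the geometric normalization of $\inv$. Otherwise the honest conclusion of your proof is $\pm\frac28$, which is enough for everything downstream.
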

    
    Before proving this theorem, we make some preparations. Let
    $K_v$ be the completion of $K$ at the place $v$, and let
    $G_{K_{v}}$ denote the decomposition group of $G_K$ at
    $v$. For convenience, we adopt the convention that, for any cohomology class \(\eta\), \(\eta_{v}\) denotes its restriction to \(K_{v}\).
    
    At the place $v$, the extension $L/K$ splits completely.
    Therefore, $G_{K_{v}}$ acts trivially on $E[8]$. Hence, we may
    view the corresponding cohomology classes as homomorphisms $
    f,g\in\operatorname{Hom}(G_{K_{v}},E[8])  $
    representing $(\eta_0)_{v}$ and $\xi_{v}$, respectively.  Moreover, $f$ is a tame character because $v\nmid 2$, while $g$ is unramified by
    \cite[Chapter VIII, Proposition 2.1]{Sil}. By \eqref{eq:Frob-conditions}, we have
    $\label{eq:z-frob}
    	g(\operatorname{Frob}_{v})=2P.
    $
Recall that we have fixed a symplectic basis $(P,Q)$ satisfying
$e(P,Q)=\zeta_8$. We may then write
\[
f(\sigma)=a(\sigma)P+b(\sigma)Q,\qquad
g(\sigma)=c(\sigma)P+d(\sigma)Q,
\]
where
\[
a,b,c,d\in
\operatorname{Hom}_{\mathrm{cont}}
(G_{K_{v}},\mathbb Z/8\mathbb Z).
\]
We are now ready to prove  Theorem \ref{cc}.
\begin{proof}
	Firstly, we compute the cup product. For
	$\sigma,\tau\in G_{K_{v}}$, we have
	\begin{align*}
		(f\cup g)(\sigma,\tau)
		&=e_8(f(\sigma),\sigma(g(\tau)))\\
		&=e_8(f(\sigma),g(\tau))\\
		&=e_8(a(\sigma)P+b(\sigma)Q,
		c(\tau)P+d(\tau)Q)\\
		&=\zeta_8^{a(\sigma)d(\tau)-b(\sigma)c(\tau)}.
	\end{align*}
	
	By the definition of $\pi$, we have $	\pi_*(g)(\sigma)=\zeta_8^{d(\sigma)}$.
	On the one hand, by the choice of $S$, the class $\pi_*(g)$ is trivial in
	$H^1(K_{v},\mu_8)$. Hence, $d$ is the trivial character.
	Therefore,	$	g(\sigma)=c(\sigma)P$,
	and consequently, $
	(f\cup g)(\sigma,\tau)
	=
	\zeta_8^{-b(\sigma)c(\tau)}$.
	On the other hand, we have
	$
	\pi_*(f)(\sigma)
	=
	e(P,a(\sigma)P+b(\sigma)Q)
	=
	\zeta_8^{b(\sigma)}.
	$
	Moreover, by the construction of $\eta_0$, the character $\pi_*(f)$
	corresponds to the element $a\in K_{v}^{\times}/K_{v}^{\times 8}$
	with $v_{\mathfrak p}(a)=1$.
	Therefore, by \cite[Chapter XIV, Proposition 3]{Serre}, the local invariant
	of $f\cup g$ is given by
	\[
	\operatorname{inv}_{v}(f\cup g)
	=
	-\frac{c(\operatorname{Frob}_{v})}{8}.
	\]
	Using \eqref{eq:z-frob}, we obtain
$
	\operatorname{inv}_{v}(f\cup g)
	=
	-\frac{2}{8}.
$
\end{proof}

We next modify $\eta_0$ by a class from the kernel of
\eqref{eq:torsion-extension} so as to impose the equality needed for the
index calculation.  Choose representatives in $K^\times$ for all Kummer
classes occurring below.  In particular, write
\[
 z=\pi_*(\xi)\in H^1(K,\mu_8)=K^\times/K^{\times8}
\]
and define
\begin{equation*}\label{eq:defect-t-M}
 \delta_0=T(\eta_0,\xi-\eta_0),
 \qquad
 t=\frac z{a^2},
 \qquad
 N=K(\sqrt[8]{t}).
\end{equation*}

\begin{lemma}\label{lem:defect-relative}
The element $\delta_0$ lies in the relative Brauer group
\[
 \Br(N/K)=\ker\bigl(\Br(K)\to\Br(N)\bigr).
\]
\end{lemma}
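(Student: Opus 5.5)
The key observation is that $\pi_*(\xi-\eta_0)$ is, by construction, the Kummer class of $t$. Indeed,
\[
\pi_*(\xi-\eta_0)=\pi_*(\xi)-\pi_*(\eta_0)=z\cdot a^{-1}\in K^\times/K^{\times 8}.
\]
This is not yet $t=z/a^2$. To get $t$ exactly, I would rewrite $\delta_0$ using bilinearity and symmetry of $T$ together with Proposition \ref{ff}:
\[
\delta_0=T(\eta_0,\xi)-T(\eta_0,\eta_0)=T(\eta_0,\xi)-2Ob(\eta_0).
\]
The plan is then to show separately that each term is split by $N$. That is the hard route, so I prefer the following more direct one. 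Over $N$ the class $t$ becomes an eighth power, so $\pi_*(\xi)=\pi_*(2\eta_0)$ in $H^1(N,\mu_8)$. By exactness of \eqref{bb} over $N$ (with $N\supset K\ni\zeta_8$, $P$ still rational), we get
\[
\operatorname{res}_N(\xi-2\eta_0)=i_*(\chi)
\]
for some $\chi\in H^1(N,\mathbb Z/8\mathbb Z)$.

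Next I compute over $N$:
\[
\operatorname{res}_N\delta_0=T(\eta_0,\xi-2\eta_0)+T(\eta_0,\eta_0)=T(\eta_0,i_*\chi)+2Ob(\eta_0).
\]
By Proposition \ref{tate} and symmetry, $T(i_*\chi,\eta_0)=[\chi,\pi_*\eta_0]=[\chi,a]$. This still leaves the term $2Ob(\eta_0)$, so the decomposition should be arranged differently.

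The better approach is to write $\xi-\eta_0=\eta_0+i_*\chi$ over $N$, using $\xi-\eta_0-\eta_0=i_*\chi$. Then
\[
\delta_0=T(\eta_0,\eta_0)+T(\eta_0,i_*\chi)=2Ob(\eta_0)+[\chi,a].
\]
Since this is not obviously zero, the definition of $\delta_0$ must instead be handled place by place. The actual plan I would follow is therefore local.

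By Hasse–Brauer–Noether, it suffices to check that $\operatorname{res}_{N_w}(\delta_0)_u=0$ for every place $u$ of $K$ and every $w\mid u$ of $N$. This reduces to showing that $\inv_u(\delta_0)\cdot[N_w:K_u]\equiv 0$ (mod $\mathbb Z$). I would split into three cases.

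\textbf{Places outside $S\cup\{v\}$.} Both $\eta_0$ and $\xi$ are unramified there: $\eta_0$ by Lemma \ref{lem:unramified-lift}, and $\xi$ because the place is of good reduction and prime to $8$. The cup product of unramified classes with $E[8]$ unramified and $\mu_8$ coefficients vanishes in $\Br(K_u)$, so $(\delta_0)_u=0$.

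\textbf{Places $u\in S$.} Here $a$ is an eighth power, so $\pi_*(\eta_0)_u=0$ and $(\eta_0)_u=i_*(\chi_u)$ is a Kummer-type class coming from $\langle P\rangle$. By Proposition \ref{tate},
\[
(\delta_0)_u=[\chi_u,\pi_*(\xi-\eta_0)]=[\chi_u,z].
\]
Also $t\equiv z$ modulo eighth powers locally, so $K_u(\sqrt[8]{t})=K_u(\sqrt[8]{z})$. Hence $z$ is an eighth power in $N_w$, and $[\chi_u,z]$ dies there.

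\textbf{The place $v$.} By Theorem \ref{cc} and the local computation there, I expect $\inv_v T(\eta_0,\xi)=-2/8$. Moreover $T(\eta_0,\eta_0)_v=\zeta_8^{-b\cup b}$-type terms vanish, since $v$ splits completely in $L$ and $b$ is tame. So $\inv_v\delta_0=-1/4$. On the other hand, $t$ has valuation $-2$ at $v$ ($z$ is a $v$-unit), so $N_w/K_v$ has ramification index $4$. This kills a class of order $4$.

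Finally, since the local invariants sum to zero, the Archimedean places are irrelevant: $K$ is totally imaginary. The main obstacle is the place $v$, specifically verifying $\inv_v T(\eta_0,\eta_0)=0$ and the exact local degree of $N$.
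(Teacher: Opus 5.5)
Your overall route is the paper's: check $\operatorname{res}_{N_w}(\delta_0)_u=0$ place by place and conclude by Brauer--Hasse--Noether. The places outside $S\cup\{v\}$ are handled identically. At $u\in S$, you compute $(\delta_0)_u=[\chi_u,z]$ via Proposition~\ref{tate} and note that it dies because $z\in N_w^{\times 8}$; this is a harmless variant of the paper's isotropy argument. Using only the ramification index $4$ of $N_w/K_v$, which follows from $v(t)=-2$, is also fine, and it even avoids needing $z\in K_v^{\times 8}$.

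\textbf{The gap is at $v$, where you flag it.} You aim for $\inv_v\delta_0=-1/4$, which needs $\inv_vT(\eta_0,\eta_0)=0$, and the local computation you give for this is wrong. Write $(\eta_0)_v=xP+yQ$ with $x,y\in\operatorname{Hom}(G_{K_v},\mathbb Z/8\mathbb Z)$. The cocycle is then $(\sigma,\tau)\mapsto\zeta_8^{x(\sigma)y(\tau)-y(\sigma)x(\tau)}$, i.e.\ $2(x\cup y)$. There is no $y\cup y$ term at all, since $e_8(Q,Q)=1$. Here $y$ is the Kummer character of the uniformizer $a$. The $P$-coordinate $x$, however, is not constrained by anything local: not by $v$ splitting in $L$, and not by tameness. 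For instance, an unramified $x$ with $x(\operatorname{Frob}_v)=1$ would give invariant $\pm 2/8\neq 0$ by the formula used in Theorem~\ref{cc}.

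The missing observation is that no exact value is needed. You already wrote $T(\eta_0,\eta_0)=2Ob(\eta_0)$ with $Ob(\eta_0)\in\Br(K)[8]$ (Proposition~\ref{ff}). Hence $(\delta_0)_v$ is $4$-torsion. Since $4$ divides the ramification index, hence the degree, of $N_w/K_v$, its restriction to $N_w$ vanishes. This is exactly how the paper concludes.

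The vanishing you wanted does in fact hold, but only for a global reason. $T(\eta_0,\eta_0)$ has zero invariant at every place $u\neq v$:
\begin{itemize}
\item outside $S\cup\{v\}$ it is a cup product of unramified classes;
\item at $u\in S$, $(\eta_0)_u$ comes from $H^1(K_u,\langle P\rangle)$ because $a\in K_u^{\times 8}$, and $\langle P\rangle$ is isotropic;
\item $K$ is totally imaginary, so the Archimedean places contribute nothing.
\end{itemize}
Reciprocity then forces $\inv_vT(\eta_0,\eta_0)=0$. Your proposal supplies neither this argument nor the $4$-torsion shortcut, so as written the case $u=v$ is not established.
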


\begin{proof}
By the Brauer-Hasse-Noether theorem, it is sufficient to show that
$\delta_0$ is trivial at every place of $N$. Let $w$ be a place of $N$,
and let $u$ be the place of $K$ lying below $w$.

If $u\notin S\cup\{v\}$, then both $\eta_0$ and $\xi$ are
unramified at $u$. Their cup product is inflated from a class over the residue
field, which has cohomological dimension one. Hence, $
(\delta_0)_w=0$.

If $u\in S$, then $a$ is an eighth power in $K_u$. By the exactness of
\eqref{dd}, the class $(\eta_0)_u$ lies in the image of
\[
H^1(K_u,\langle P\rangle)\longrightarrow H^1(K_u,E[8]).
\]
Over every completion $N_w$, both $z$ and $a$ become eighth powers in
$N_w$. Hence, $\xi-\eta_0$ also lies in the image of
\[
H^1(N_w,\langle P\rangle)\longrightarrow H^1(N_w,E[8]).
\]
Since $\langle P\rangle$ is isotropic with respect to the Weil pairing
$e_8$, we have
$
\operatorname{res}_{N_w}(\delta_0)_u=0.
$

Now suppose that $u=v$. $\delta_0=T(\eta_0,\xi-\eta_0)=T(\eta_0,\xi)-T(\eta_0,\eta_0)$. Let $w$ be a place of $N$ above
$v$. By the choice of $S$, the class $\pi_*(\xi)$ is trivial,
so $z$ is an eighth power locally at $v$. Since $v_{\mathfrak p}(a)=1$,
the local Kummer class of $t=z/a^2$
has exact order \(4\). Therefore, for every \(w\mid v\), $[N_w:K_{v}]=4.
$

By Theorem \ref{cc}, the first term \(T(\eta_0,\xi)\) has order \(4\). By Proposition \ref{ff}, the
second term $T(\eta_0,\eta_0)=2Ob(\eta_0)$, which is also annihilated by \(4\). Hence
\((\delta_0)_v\) is annihilated by \(4\). Thus by \cite[Chapter XIII, Proposition 6]{Serre}, $\operatorname{res}_{N_w/K_{v}}$ is trivial.
\end{proof}


\begin{lemma}
\label{lem:relative-symbol}
Every class $\gamma\in\Br(N/K)$ can be written $ \gamma=[\chi,t]_8 $ for some $\chi\in H^1(K,\mathbb{Z}/8\mathbb{Z})$.
\end{lemma}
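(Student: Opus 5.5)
The plan is to reduce the statement to the classical description of the relative Brauer group of a cyclic extension by cyclic algebras. Then I would translate cyclic algebras into the Hilbert symbol $[-,-]_8$ using the Kummer identifications already in force, since $K=\mathbb{Q}(\zeta_8)$ contains $\mu_8$.

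First I would set up the cyclic structure of $N/K$. Let $\kappa_t\in H^1(K,\mu_8)=\operatorname{Hom}(G_K,\mu_8)$ be the Kummer character of $t$, namely $\sigma\mapsto \sigma(\sqrt[8]{t})/\sqrt[8]{t}$. Its kernel cuts out exactly $N=K(\sqrt[8]{t})$, so $N/K$ is cyclic of degree $m\mid 8$. Via $\zeta_8$ it defines a character $\chi_N\colon \Gal(N/K)\hookrightarrow \mathbb{Z}/8\mathbb{Z}$ whose image is $(8/m)\mathbb{Z}/8\mathbb{Z}$. Let $\sigma_0\in\Gal(N/K)$ be the generator with $\chi_N(\sigma_0)=8/m$.

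Next I would apply the cyclic algebra theorem. For a cyclic extension, periodicity of Tate cohomology gives
\[
\Br(N/K)=H^2(\Gal(N/K),N^\times)\cong \hat H^0(\Gal(N/K),N^\times)=K^\times/N_{N/K}(N^\times),
\]
with the isomorphism given by $b\mapsto b\cup\delta\chi_N$, i.e.\ the cyclic algebra $(N/K,\sigma_0,b)$. Hence every $\gamma\in\Br(N/K)$ equals $(N/K,\sigma_0,b)$ for some $b\in K^\times$. Inflated to $G_K$, this is $b\cup\delta\chi$, where $\chi$ is $\chi_N$ composed with the projection $G_K\to\Gal(N/K)$.

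Then I would identify this class with a Hilbert symbol. By \cite[Chapter XIV, Proposition 5]{Serre}, the symbol pairing of $x\in H^1(K,\mathbb{Z}/8\mathbb{Z})$ with the Kummer class of $b$ is exactly $\delta x\cup b$, i.e.\ the cyclic algebra attached to $x$ and $b$. This gives $\gamma=\pm[\chi_N,b]_8$, a symbol with $t$ in the character slot. To move $t$ into the $\mu_8$ slot, I would use the Kummer isomorphism $H^1(K,\mathbb{Z}/8\mathbb{Z})\cong K^\times/K^{\times8}$ coming from $\zeta_8$. Under it, $\chi_N$ corresponds to $t$ itself, up to the harmless normalization of $\sigma_0$ that fixes the power. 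Let $\chi_b$ be the character corresponding to $b$. The norm residue symbol on $K^\times/K^{\times8}\times K^\times/K^{\times8}$ is antisymmetric (\cite[Chapter XIV, Proposition 7]{Serre}), so $[\chi_N,b]_8=[\chi_{b},t]_8^{-1}=[\chi_{b^{-1}},t]_8$. Taking $\chi=\chi_{b^{\pm1}}$, with the sign fixed by the identification above, gives $\gamma=[\chi,t]_8$.

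The main obstacle is bookkeeping rather than substance. One must check that the three identifications agree, up to a sign or unit that can be absorbed into $\chi$:
\begin{enumerate}[label=\textup{(\alph*)}]
\item the cyclic-algebra isomorphism $K^\times/N_{N/K}N^\times\cong\Br(N/K)$;
\item Serre's description of the symbol as $\delta\chi\cup b$;
\item the Kummer identification $\chi_N\leftrightarrow t$ when $m<8$, where $\chi_N$ takes values in the subgroup $(8/m)\mathbb{Z}/8\mathbb{Z}$.
\end{enumerate}
For item (c) I would do a direct cocycle check. Inflating from $\Gal(N/K)$ to $G_K$ does not change the cup product, and since only surjectivity onto $\Br(N/K)$ is needed, any unit discrepancy, or an inverse, can be absorbed into the choice of $\chi$.
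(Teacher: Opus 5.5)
Your proposal is correct and is essentially the standard proof of the result the paper simply cites (Gille--Szamuely, Corollary 4.7.6): you describe $\Br(N/K)$ by cyclic algebras via Tate periodicity, identify $\delta\chi_N\cup b$ with the symbol $[\chi_N,b]_8$, and move $t$ into the $\mu_8$-slot by antisymmetry, absorbing all signs via $b\mapsto b^{-1}$. Your uniform treatment of $m=[N:K]\mid 8$ with $\mathbb{Z}/8\mathbb{Z}$-valued characters is also fine.

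One small point: the two facts you quote from Serre's Chapter XIV (the symbol equals $\delta\chi\cup b$ up to sign, and antisymmetry) are stated there for local fields. Both are formal cup-product identities valid over any field; antisymmetry is just graded-commutativity, since the swap on $\mu_8\otimes\mu_8$ is trivial. So they apply directly over the number field $K$, but you should justify them that way rather than by the local citation.
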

\begin{proof}
This is \cite[Section 4.7, Corollary 4.7.6]{GilleSzamuely}.
\end{proof}

Applying Lemma~\ref{lem:relative-symbol} to $-\delta_0$, there exists $\chi\in H^1(K,\mathbb{Z}/8\mathbb{Z})$ such that
 $\label{11}
 	 [\chi,t]_8=-\delta_0.
$

Set $\label{eq:eta-final}
 \eta=\eta_0+i_*(\chi)$.
By Proposition \ref{tate}, for every \(y\in H^1(K,E[8])\), we have $
T(i_*(\chi),y)=[\chi,\pi_*(y)]_8.$
We recall the notation
\[
\kappa_8(P)=\xi,\qquad z=\pi_*(\xi),\qquad
a=\pi_*(\eta_0),\qquad \delta_0=T(\eta_0,\xi-\eta_0),
\qquad
t=\frac z{a^2}.
\]
We now expand the terms one by one:
\begin{align*}
 T(\eta,\xi-\eta)
 &=T(\eta_0+i_*(\chi),\xi-\eta_0-i_*(\chi))\notag\\
 &=\delta_0+T(i_*(\chi),\xi)-T(i_*(\chi),\eta_0)
   -T(\eta_0,i_*(\chi))-T(i_*(\chi),i_*(\chi))\notag\\
 &=\delta_0+[\chi,z]_8-2[\chi,a]_8   \text{ (by Proposition \ref{tate})}\notag\\
 &=\delta_0+[\chi,z/a^2]_8\notag\\
 &=0.\label{eq:defect-killed} \qquad \text{ (by  \eqref{11})}
\end{align*}
Here $T(i_*(\chi),i_*(\chi))=0$ by isotropy.

Let $[C]\in H^1(K,E(\overline{K}))$ be the corresponding homogeneous space of $\eta$, and put
\begin{equation*}\label{eq:alpha-beta}
 \alpha=Ob(\eta),
 \qquad
 \beta=T(\eta, P)\overset{\mathrm{def}}{:=}T(\eta,\xi).
\end{equation*}
Since $T(\eta,\xi-\eta)=T(\eta,\xi)-T(\eta,\eta)$, we obtain
\begin{equation}\label{eq:beta-2alpha}
   \beta=2\alpha.
\end{equation}

At $v$, both $i_*(\chi)$ and $\xi$ take values in the
isotropic subgroup $\langle P\rangle$, thus $\beta_v=T(\eta,\xi)_v=T(\eta_0,\xi)_v+T(i_*(\chi),P)_v=T(\eta_0,\xi)_v$.  Therefore, by Theorem \ref{cc} we have$\label{eq:beta-final-local}
 \inv_{v}(\beta)=-\frac28.$
In particular,  $\alpha_{v}$  has  order exactly $8$. 
\section{Proof  of Theorem \ref{mainthm}}

In Section $3$, we construct $\eta\in H^1(K,E[8])$ and denote the corresponding homogeneous space by $C$.	Recall the following Kummer sequence:\label{tt}
 \begin{align}\label{kummer}
 		0\rightarrow E(K)/8E(K)\xrightarrow{\kappa_8} &H^1(K,E[8])\xrightarrow{\rho} H^1(K,E(\overline{K}))[8]\rightarrow 0.
 \end{align}
By \eqref{eq:beta-2alpha}, we have $T(P,\eta)=2Ob_8(\eta)$ and $Ob_8(\eta)_{v}$ has exact order $8$. 
\begin{theorem}\label{22}
	The curve \(C\) has period \(8\) and index \(16\). Furthermore, for every lift \(\gamma\in H^1(K,E[8])\) of  \([C]\), the obstruction class \(Ob_8(\gamma)\) has  order exactly \(8\). 
\end{theorem}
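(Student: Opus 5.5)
Every lift of $[C]$ has the form $\gamma=\eta+\kappa_8(R)$ with $R\in E(K)$, by the Kummer sequence \eqref{kummer}. Proposition~\ref{ff} gives
\[
Ob_8(\gamma)=Ob_8(\eta)+Ob_8(\kappa_8(R))+T(\eta,R).
\]
We have $Ob_8(\kappa_8(R))=0$, since the pair $(E,8[O_E]$-class of degree~$8$ at a rational point$)$ comes from a rational divisor, i.e.\ $\kappa_8(R)$ has trivial obstruction. Hence
\[
Ob_8(\gamma)=\alpha+T(\eta,R).
\]
The plan is to show two things. First, $Ob_8(\gamma)_v$ still has exact order~$8$, so $Ob_8(\gamma)$ has order $\ge 8$; since it lies in $\Br(K)[8]$, its order is exactly $8$. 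Second, this forces $I(C)=16$ and $P(C)=8$.

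\textbf{Step 1: the local computation at $v$.} Every $R\in E(K)$ satisfies $\kappa_8(R)_v=0$ or at least has local invariant pairing with $\eta$ divisible enough. I would write $R=2R'+R_i$ with $R_i$ one of the representatives of $E(K)/2E(K)$. Since $\operatorname{Frob}_v|_{F_{\mathrm{MW}}}=1$, the point $S_i$ with $2S_i=R_i$ is defined over $K_v$. Hence $R_i\in 2E(K_v)$, so $R\in 2E(K_v)$ and $\kappa_8(R)_v=2\kappa_8(R'')_v$ for some $R''\in E(K_v)$. Then
\[
T(\eta,R)_v=2\,T(\eta,R'')_v .
\]
Now $\kappa_8(R'')_v$ is unramified, because $v\nmid 2$ and $E$ has good reduction at $v$. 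The argument in the proof of Theorem~\ref{cc} then shows $T(\eta,R'')_v$ has invariant $-c(\operatorname{Frob}_v)/8$ for the $P$-component $c$ of $\kappa_8(R'')_v$. Only the $\langle P\rangle$-part pairs nontrivially, since $\eta_v$ has the same $Q$-component as $(\eta_0)_v$, which is ramified of exponent one; the unramified $Q$-parts pair trivially. Hence $T(\eta,R)_v$ has invariant in $\frac{2}{8}\mathbb{Z}/\mathbb{Z}$. Since $\inv_v\alpha$ is of exact order $8$, i.e.\ an odd multiple of $1/8$, adding an element of $\frac{2}{8}\mathbb{Z}$ keeps it an odd multiple. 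So $Ob_8(\gamma)_v$ has order $8$.

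I expect this to be the main obstacle. One must check carefully that $\inv_v\alpha$ is an odd multiple of $1/8$: $2\alpha_v=\beta_v$ has invariant $-2/8$, so $\inv_v\alpha\in\{-1/8,\,3/8\}\bmod 1$ up to $1/2$, which is odd. One must also check that the unramified $Q$-component of $\kappa_8(R'')_v$ contributes nothing. If the second point fails, I would instead use $d\equiv 0$, which comes from the choice of $S$ (the $\pi_*$ of Mordell--Weil classes is trivial at $v$), as in Theorem~\ref{cc}.

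\textbf{Step 2: period and index.} Proposition~\ref{neil} gives $I(C)\mid 8\cdot 8$. To get $I(C)\mid 16$, I use $2\alpha=\beta=T(\eta,\xi)$. By Proposition~\ref{ff},
\[
Ob_8(\eta-\xi)=\alpha-\beta=-\alpha ,
\]
but this does not help by itself. Instead I pass to $Ob_{16}$ via Theorem~\ref{cd}. Consider $j_*\eta$ and $\kappa_{16}(P')$, which both lift $[C]$ into $H^1(K,E[16])$. Take $\gamma'=j_*\eta-\kappa_{16}(P)$. Then
\[
Ob_{16}(\gamma')=2\alpha-T(\kappa_{16}(P),j_*\eta)=2\alpha-\beta=0 ,
\]
using Proposition~\ref{tate2}. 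So $C$ carries a rational divisor of degree $16$, and $I(C)\mid 16$.

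For the period: $P(C)\mid 8$. If $P(C)\mid 4$, then $[C]$ lifts to $\zeta\in H^1(K,E[4])$, and $j_*\zeta$ is a lift of $[C]$ with $Ob_8(j_*\zeta)=2Ob_4(\zeta)$ by Theorem~\ref{cd}, which has order $\le 4$. This contradicts Step 1, so $P(C)=8$.

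For the index: if $I(C)=8$, there is a rational degree-$8$ divisor $D$. The pair $(C,D)$ is then a lift $\gamma$ with $Ob_8(\gamma)=\delta_C([D])=0$ by \eqref{es}, again contradicting Step 1. Since $8=P(C)\mid I(C)\mid 16$, we conclude $I(C)=16$. Infinitely many such $C$ arise from the infinitely many choices of $v$, which are distinguished by their ramification at $v$.
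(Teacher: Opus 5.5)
Your proposal is correct and follows the paper's proof essentially step for step. It uses the decomposition $Ob_8(\eta+\kappa_8(R))=\alpha+T(\eta,R)$, and then $\operatorname{Frob}_v|_{F_{\mathrm{MW}}}=1$ to get $R\in 2E(K_v)$, so that $T(\eta,R)_v$ is killed by $4$ while $\alpha_v$ has exact order $8$. It gets the period from Theorem~\ref{cd}, and $I(C)\mid 16$ from $Ob_{16}(j_*\eta-\kappa_{16}(P))=0$ together with Proposition~\ref{tate2}. It excludes $I(C)=8$ via the pair $(C,[D])$ and \eqref{es}, which is exactly what the paper takes from Clark's Theorem~5.

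One cleanup in Step 1: bilinearity, $T(\eta,R)_v=2\,T(\eta,R'')_v$ with $T(\eta,R'')_v\in\Br(K_v)[8]$, already gives $\inv_v T(\eta,R)_v\in\frac{2}{8}\mathbb{Z}/\mathbb{Z}$. So drop the extra Frobenius analysis. Its claim that the unramified $Q$-part pairs trivially is not justified, because the $\langle P\rangle$-component of $\eta_v$ contains $\chi_v$ and can be ramified.
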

\begin{proof}
	For any lift $\gamma\in H^1(K,E[8])$ of $[C]$, we first prove that
	$Ob_8(\gamma)$ has  order  exactly $8$.
	
	By the exactness of \eqref{kummer}, there exists
	$S\in E(K)$ such that $\gamma=\eta+\kappa_8(S).$
	By \eqref{ff}, we have
	\[
	\begin{aligned}
		Ob_8(\gamma)
		&=Ob_8(\eta+\kappa_8(S))\\
		&=Ob_8(\eta)+Ob_8(\kappa_8(S))
		+T(S,\eta)\\
		&=Ob_8(\eta)+T(S,\eta)
	\end{aligned}
	\]
	, where we use the fact that $Ob_8(\kappa_8(S))=0$.
	
	Restricting to $K_{v}$, the class
	$Ob_8(\eta)_{v}$ has exact order $8$. On the other hand, by the
	definition of $F_{\mathrm{MW}}$ and the condition $\operatorname{Frob}_{v}|_{F_{\mathrm{MW}}}=1$,
	the image of $E(K)$ in $E(K_{v})$ is contained in
	$2E(K_{v})$. Hence, the order of $	T(S,\eta)_{v}$
	divides \(4\).
	
 Consequently,	$Ob_8(\gamma)_{v}$
	also has exact order \(8\), and hence so does \(Ob_8(\gamma)\).

Now we prove that $P(C)=8$. First, it is clear that
the period of $C$ divides $8$. Suppose, for contradiction, that
$[C]\in H^1(K,E(\overline{K}))[4]$. By taking $n=4$, $m=2$ in  Theorem \ref{cd}, there exists a lift
$
\gamma\in H^1(K,E[8])
$
of $C$ such that
$
Ob_8(\gamma)\in \Br(K)[2].
$
This contradicts the fact proved above.
 Finally, we compute the index of $C$. Let
 \[
 j_*:H^1(K,E[8])\longrightarrow H^1(K,E[16])
 \]
 be the map induced by the natural inclusion $ E[8]\hookrightarrow E[16]$.
 Then
 \[
 \begin{aligned}
 	Ob_{16}(j_*(\eta)-\kappa_{16}(P))
 	&=
 	Ob_{16}(j_*(\eta))
 	-T(j_*(\eta),\kappa_{16}(P))\\
 	&=
 	2Ob_8(\eta)-T(\eta,\kappa_8(P))
 	\qquad\text{(by Proposition \ref{tate2})}\\
 	&=0.   \qquad \text{(by \eqref{eq:beta-2alpha})}
 \end{aligned}
 \]
 Here,
$
 \kappa_{16}:E(K)/16E(K)\longrightarrow H^1(K,E[16])
$
 denotes the connecting homomorphism. Therefore,
 by the same argument as in Proposition \ref{neil}, the index of \(C\)
 divides \(16\).
 
 Moreover, the index of $C$ cannot be equal to $8$. Assume that $I(C)=8$, by Theorem $5$ in \cite{clark},  there exists
 lift $\gamma\in H^1(K,E[8])$ of $C$ such that
$
 Ob_8(\gamma)=0,
$ which is a contradiction.
 Consequently, the index of $C$ is exactly $16$.
\end{proof}

For the fixed curve $E=E_\lambda$, the Chebotarev density theorem
provides infinitely many  places $v$. We show that
the homogeneous spaces $C_{v}$ constructed in Theorem \ref{22} are
pairwise distinct.

Let $a_{v}$ be the ray generator defined in
\eqref{eq:ray-generator}. Suppose, for contradiction, that
$
C_{v}=C_{v'}
$
for two distinct places $v$ and $v'$. Then their
chosen lifts differ by a global Kummer class. Applying the quotient map
$\pi_*$ associated with \eqref{dd}, we obtain
\[
\frac{a_{v}}{a_{v'}}
\in
\pi_*\kappa_8(E(K))+K^{\times 8}
\quad\text{in }K^\times/K^{\times8}.
\]

By the choice of $S$, representatives of the finite group
$\pi_*\kappa_8(E(K))$ can be chosen whose primes are all contained
in $S$. However, the left-hand side has valuation \(1\) at
$v$. This is impossible in
$K^\times/K^{\times8}$. Therefore, the homogeneous spaces
$C_{v}$ are pairwise distinct.

In Section 3, we construct infinitely many homogeneous spaces that provide counterexamples on each curve \(E_\lambda\) with \(\lambda\in\Omega\). In Section 2, we construct infinitely many such elliptic curves with pairwise distinct \(j\)-invariants. This completes the proof of Theorem~\ref{mainthm}.

\appendix
\makeatletter
\renewcommand{\subsection}{%
	\@startsection{subsection}{2}%
	{\z@}%
	{.5\linespacing\@plus.7\linespacing}%
	{.5\linespacing}%
	{\normalfont\bfseries}%
}
\makeatother

\section{Exact Magma certificates}

The calculations below were performed with Magma V2.29-9.  They use exact
number fields and exact finite fields; no floating-point computation enters the proof.

\subsection{The full division field and the action of $\tau$}\label{app:magma}

\begin{lstlisting}[style=magma]
	
	Q0 := Rationals();
	t := Q0!(-17)/2;
	b := (2*t - 1)*(t - 1);
	c := b/t;
	
	E := EllipticCurve([Q0 | 1-c, -b, -b, 0, 0]);
	
	K<zeta> := CyclotomicField(8);
	R<x> := PolynomialRing(K);
	EK := BaseChange(E,K);
	
	psi8full, psi8, psi2 := DivisionPolynomial(EK,8);
	
	
	facK := Factorization(psi8full);
	
	print "factor degrees over K",
	[ <Degree(f[1]),f[2]> : f in facK ];
	
	print "x^8 - 647 irreducible over K:",
	IsIrreducible(x^8 - K!647);
	
	L<u> := ext<K | x^8 - 647>;
	EL := BaseChange(E,L);
	
	RL<X> := PolynomialRing(L);
	ff := Factorization(RL!psi8full);
	
	print "factorization over L",
	#ff,
	[ <Degree(g[1]),g[2]> : g in ff ];
	
	roots := [
	-Coefficient(g[1],0)/Coefficient(g[1],1)
	: g in ff
	| Degree(g[1]) eq 1
	];
	
	aa := aInvariants(EL);
	pts := [];
	numberSquareDiscriminants := 0;
	
	for xx in roots do
	
	By := aa[1]*xx + aa[3];
	
	dy :=
	By^2
	+ 4*(
	xx^3
	+ aa[2]*xx^2
	+ aa[4]*xx
	+ aa[5]
	);
	
	ok, sy := IsSquare(dy);
	
	if ok then
	
	numberSquareDiscriminants +:= 1;
	
	y1 := (-By + sy)/2;
	y2 := (-By - sy)/2;
	
	Append(~pts, EL![xx,y1,1]);
	
	if sy ne 0 then
	Append(~pts, EL![xx,y2,1]);
	end if;
	
	end if;
	
	end for;
	
	print "number of x-roots", #roots;
	print "square y-discriminants",
	numberSquareDiscriminants;
	print "number of distinct nonzero E[8]-points",
	#pts;
	
	P := EL![0,0,1];
	
	print "P order", Order(P);
	
	Qbasis := EL!0;
	
	for QQ in pts do
	if Order(QQ) eq 8
	and WeilPairing(P,QQ,8) eq L!zeta then
	
	Qbasis := QQ;
	break;
	
	end if;
	end for;
	
	print "symplectic complement found", Qbasis ne EL!0;
	
	emb := hom<K -> L | L!zeta>;
	tau := hom<L -> L | emb, (L!zeta)*u>;
	
	tauQ := EL![tau(Qbasis[1]), tau(Qbasis[2]), tau(Qbasis[3])];
	
	DD := tauQ - Qbasis;
	cc := -1;
	
	for j in [0..7] do
	if DD eq j*P then
	cc := j;
	break;
	end if;
	end for;
	
	print "tau(Q)-Q = cP with c=", cc;
\end{lstlisting}

The expected decisive output is of the following form.

\begin{lstlisting}[style=magma]
	factor degrees over K [ <1, 1>, <1, 1>, <1, 1>, <1, 1>, <1, 1>, <2, 1>, <2, 1>,
	<4, 1>, <4, 1>, <8, 1>, <8, 1> ]
	x^8 - 647 irreducible over K: true
	factorization over L 33 [ <1, 1>, <1, 1>, <1, 1>, <1, 1>, <1, 1>, <1, 1>, <1,
	1>, <1, 1>, <1, 1>, <1, 1>, <1, 1>, <1, 1>, <1, 1>, <1, 1>, <1, 1>, <1, 1>, <1,
	1>, <1, 1>, <1, 1>, <1, 1>, <1, 1>, <1, 1>, <1, 1>, <1, 1>, <1, 1>, <1, 1>, <1,
	1>, <1, 1>, <1, 1>, <1, 1>, <1, 1>, <1, 1>, <1, 1> ]
	number of x-roots 33
	square y-discriminants 33
	number of distinct nonzero E[8]-points 63
	P order 8
	symplectic complement found true
	tau(Q)-Q = cP with c = 1
\end{lstlisting}

\subsection{The Kummer Frobenius at 281}\label{magma2}

\begin{lstlisting}[style=magma]
	procedure LocalKappaAllEmbeddings(p)
	
	f := Order(Integers(8)!p);
	qres := p^f;
	
	F<g> := GF(qres);
	
	t := F!(-17)/2;
	b := (2*t - 1)*(t - 1);
	c := b/t;
	
	E := EllipticCurve([
	F | 1-c, -b, -b, 0, 0
	]);
	
	O := E!0;
	P := E![0,0,1];
	
	zeta := RootOfUnity(8,F);
	
	tor := DivisionPoints(O,8);
	
	Q := O;
	
	for S in tor do
	if Order(S) eq 8
	and WeilPairing(P,S,8) eq zeta then
	
	Q := S;
	break;
	
	end if;
	end for;
	
	print "symplectic complement found", Q ne E!0;
	
	F8<w> := ext<F | 8>;
	E8 := BaseChange(E,F8);
	
	PP := E8!P;
	divs := DivisionPoints(PP,8);
	
	if #divs eq 0 then
	print "No division point was found";
	return;
	end if;
	
	R := divs[1];
	
	FrobR := E8![
	R[1]^qres,
	R[2]^qres,
	R[3]^qres
	];
	
	T := FrobR - R;
	
	ispow, eighthroot := IsPower(F!647,8);
	
	print "prime, residue degree, number of K-primes",
	p, f, 4 div f;
	
	print "647 is an eighth power",
	ispow, eighthroot;
	
	for j in [1,3,5,7] do
	
	Qj := E8!(j*Q);
	
	u := -1;
	v := -1;
	
	for ii in [0..7] do
	for jj in [0..7] do
	
	if T eq ii*PP + jj*Qj then
	u := ii;
	v := jj;
	end if;
	
	end for;
	end for;
	
	print <j,zeta^j,u,v>;
	
	end for;
	
	end procedure;
	
	LocalKappaAllEmbeddings(281);
\end{lstlisting}

The expected output is

\begin{lstlisting}[style=magma]
	symplectic complement found true
	prime, residue degree, number of K-primes 281 1 4
	647 is an eighth power true 63
	<1, 89, 5, 3>
	<3, 221, 5, 1>
	<5, 192, 5, 7>
	<7, 60, 5, 5>
\end{lstlisting}

\end{document}